\documentclass[11pt, reqno]{amsart}
\usepackage{amssymb,latexsym,amsmath,amsfonts, amsthm,mathtools}
\usepackage[mathscr]{eucal}
\usepackage{xcolor}

\numberwithin{equation}{section}

\theoremstyle{definition}
\newtheorem{definition}{Definition}[section]

\newtheorem{remark}[definition]{Remark}

\theoremstyle{plain}
\newtheorem{theorem}[definition]{Theorem}
\newtheorem{lemma}[definition]{Lemma}
\newtheorem{proposition}[definition]{Proposition}

\newtheorem{result}[definition]{Result}

\newcommand\wtil[1]{\widetilde{#1}}

\newcommand{\Span}{\operatorname{span}}

\newcommand{\SH}{\operatorname{SH}}
\newcommand{\Frac}{\operatorname{Frac}}
\newcommand{\aut}{\operatorname{Aut}_{\mathrm{hol}}}
\newcommand{\Ker}{\operatorname{ker}}

\newcommand{\vfhol}{\mathfrak X_{\mathrm{hol}}}
\newcommand{\F}{\mathbb F}
\newcommand{\Kalmod}{\operatorname{Kalmod}}

\newcommand{\bdy}{\partial}

\newcommand{\holo}{\mathcal{O}}

\newcommand{\impl}{\Longrightarrow}

\newcommand{\cplx}{\mathbb{C}}

\newcommand{\rea}{\mathbb{R}}

\newcommand{\natu}{\mathbb{N}}

\begin{document}

\title[ a counterexample to the T\'{o}th-Varolin Conjecture]{A new class of smooth algebraic varieties with the density property and  a counterexample to the T\'{o}th-Varolin Conjecture}
\author{Sushil Gorai}
\address{Department of Mathematics and Statistics, Indian Institute of Science Education and Research Kolkata,
Mohanpur, Nadia, West Bengal 741246, India} 
\email{sushil.gorai@iiserkol.ac.in}

\keywords{}
\subjclass[2020]{Primary: 32M05, 14R10, 32M25, 32M17, 14J35}

\begin{abstract}
We prove that the smooth affine hypersurfaces 
\[
X_{m,n}=\{\zeta^mz_0^n+ (1+z_1\zeta)z_2+\zeta z_3^2=1\}\subset\mathbb{C}^5,\;\; m,n\geq 2,
\]
have the density property. For every odd integer $d\geq 3$, we identify $X_{d-1,d}$ with a Kaliman modification along a Brieskorn variety of type $(d,2,2,2)$. We prove that the fourfolds $X_{d-1,d}$ are diffeomorphic to $\mathbb{C}^4$ but not biholomorphic to $\mathbb{C}^4$. These, therefore, give counterexamples to the T\'{o}th-Varolin conjecture. 
\end{abstract}

\maketitle

\section{Introduction}\label{sec-intro}
A Stein manifold $M$ has the \emph{density property} if the Lie algebra generated by its complete holomorphic vector fields is dense, in the compact-open topology, in the Lie algebra of all holomorphic vector fields on $M$.  This property is the infinitesimal form of the largeness of the holomorphic automorphism group that underlies Anders\'en--Lempert theory \cite{Andersen-Lempert1993}. This property was indroduced by Varolin \cite{Varolin2001} and studied by many 
researchers (see for instance \cite{Varolin1999, Varolin2000, TothVarolin2000, TothVarolin2006, KaKu2008, KaKu2008b, KaKu2015, Kutle2016, Leuenberger2016} and references therein). Kaliman and Kutzschebauch \cite{KaKu2008} gave a very strong criterion of algebraic density peoperty based on compatible pairs. The notion of a compatible pair was later generalized in \cite{Leuenberger2016} to give a criterion of density property. 

T\'oth and Varolin formulated the following recognition conjecture in their study of semisimple homogeneous spaces~\cite[Conjecture~1.4]{TothVarolin2006}:
\emph{If a Stein manifold is diffeomorphic to $\cplx^n$ and has the density property, then it is biholomorphic to $\cplx^n$.}

This recognition problem remains open; see  \cite[Problem~11.2]{ForstnericKutzschebauch2022} for a detailed discussion of the conjecture. A related problem is:

{\em Zaidenberg's problem} \cite{Zaid1999}: {\em Is affine variety biholomorphic to $\cplx^n$ also algebraically isomorphic to $\cplx^n$?}

The reason for mentioning this is a cubic threefold called the Koras-Russell cubic 
\[
X_{KR}=\{(x,y,z_0,z_1)\in\cplx^4: x^2y+x+z_0^2+z_1^3=0\}.
\]
It is diffeomorphic to $\cplx^3$, but it was shown not to be algebraically isomorphic to $\cplx^3$ \cite{ML1996}, i.e., $X_{KR}$ is an exotic $\cplx^3$.
A smooth affine algebraic variety is called an exotic affine space if it is diffeomorphic to $\cplx^n$ but not isomorphic to $\cplx^n$ as an algebraic variety. Ramanujam \cite{Ramanujam} proved that there are no exotic affine spaces in dimension two. See \cite{Zaid1999} for more details.

The Koras-Russell cubic and related varieties have been studied extensively.
Leuenberger~\cite{Leuenberger2016} showed that $X_{KR}$ has the density property, which made the question whether it is biholomorphic to $\cplx^3$ very interesting as positive or negative answer would solve an open question: T\'{o}th-Varolin conjecture lies in one side and Zaidenberg's problem lies in the other. It is still an open problem if $X_{KR}$ is biholomorphic to $\cplx^3$. Casals and Murphy \cite{CasMurphy2019} shown that the Koras-Russell cubic is Stein deformation-equivalent to $\cplx^3$. Hence, the corresponding Weinstein structures are also deformation equivalent. That leaves no bihomorphic invariant coming from symplectic geometry to distinguish $X_{KR}$ from $\cplx^3$
\smallskip

The aim of the present paper is to give an explicit family of examples that provides a negative answer to the above conjecture.
For integers $m,n\geq 2$, let
%\begin{equation}\label{eq:Fmn}
 \[
 F_{m,n}(\zeta,z_0,z_1,z_2,z_3)
 :=\zeta^mz_0^n+ (1+z_1\zeta)z_2+\zeta z_3^2-1
 \]
%\end{equation}
and define
\begin{equation}\label{eq-Xmn}
 X_{m,n}:=\{(\zeta,z)\in \cplx^5: F_{m,n}(\zeta,z)=0\}\subset\cplx^5.
\end{equation}

\begin{theorem}\label{thm-density}
For every pair of integers $m,n\geq 2$, the affine variety $X_{m,n}$ is a complex manifold of dimension four and has the holomorphic density property.  
\end{theorem}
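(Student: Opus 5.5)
Smoothness comes first. We have $\partial F/\partial z_2 = 1+z_1\zeta$ and $\partial F/\partial z_1=\zeta z_2$. At a singular point of $X_{m,n}$ we would need $1+z_1\zeta=0$, so $\zeta\neq 0$, and then $z_2=0$. The equations $\partial F/\partial z_3=2\zeta z_3=0$ and $\partial F/\partial z_0 = n\zeta^m z_0^{n-1}=0$ force $z_3=z_0=0$. But then $F=-1\neq 0$, a contradiction. Hence $X_{m,n}$ is a smooth hypersurface of dimension four.

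For the density property we use the Kaliman--Kutzschebauch criterion \cite{KaKu2008}, in the form generalized by Leuenberger \cite{Leuenberger2016}. It suffices to find finitely many compatible pairs of complete algebraic vector fields whose values at some point span the tangent space. We then show that the module generated by these pairs is a nonzero submodule of the $\mathbb{C}[X]$-module of algebraic vector fields. Equivalently, the closure of the Lie algebra generated by complete fields contains $\mathbb{C}[X]\cdot\theta$ for a suitable field $\theta$, and we propagate this using the action of the automorphism group.

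The natural fields come from the structure of $X$ as a family over the $\zeta$-line. The fibres over $\zeta\neq 0$ look like $\{z_2(1+z_1\zeta)+\zeta z_3^2+\zeta^m z_0^n=1\}$, and the fibre over $\zeta=0$ is $\{z_2=1\}\cong\mathbb{C}^3$. The following fields annihilate $F$ and $\zeta$:
\[
\nabla_1=\zeta z_2\partial_{z_2}-(1+z_1\zeta)\partial_{z_1},\qquad
\nabla_2=2\zeta z_3\partial_{z_2}-(1+z_1\zeta)\partial_{z_3},
\]
\[
\nabla_3=n\zeta^{m}z_0^{n-1}\partial_{z_2}-(1+z_1\zeta)\partial_{z_0},\qquad
\nabla_{4}=2z_3\partial_{z_1}-z_2\partial_{z_3}.
\]
Each is a locally nilpotent derivation on $\mathbb{C}[X]$ that kills $\zeta$. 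For instance, $\nabla_4$ kills $z_0,\zeta,z_2$ and is triangular in $(z_3,z_1)$, while $\nabla_2$ and $\nabla_3$ need a check that $1+z_1\zeta$ lies in their kernel after replacing $z_2$. We also use the semisimple field $\zeta\partial_\zeta - z_1\partial_{z_1}-\ldots$, chosen with weights so that $F$ is homogeneous: the terms $z_2$, $z_1\zeta z_2$, $\zeta z_3^2$, $\zeta^mz_0^n$ must all have weight $0$.

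Compatible pairs can then be taken as $(\nabla_i,\nabla_j)$, whose kernels have transcendence degree $3$ and contain functions separating generic orbits, together with a pair (LND, semisimple field), where $\zeta$ or $z_2$ supplies the required element $h$ with $\deg_{\nabla}h=1$. A cleaner route, which I would try first, is to note that $X$ restricted to $\zeta\neq0$ has a transparent structure. We show that the submodule generated by $\ker\nabla_1\cdot\nabla_1$ and related fields contains $I\cdot\vfhol$ for a nonzero ideal $I$, using that these fields span $T_xX$ at a generic point, e.g.\ $\zeta\neq 0$, $1+z_1\zeta\neq0$. We then add a complete field moving $\zeta$, for instance a field of the form $(1+z_1\zeta)\partial_\zeta+\ldots$ or a hyperbolic field, to obtain transitivity of the automorphism group. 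The spanning condition fails when only fibre-preserving fields are used, so we need one field transverse to the $\zeta$-fibration.

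The main obstacle is producing this transverse complete field, one with $\zeta$ not in its kernel, and verifying that the automorphism group acts with an open orbit whose complement is handled. This matters because Leuenberger's criterion needs the pairs to generate all of $T_xX$ at some point, together with transitivity (or an open orbit plus the submodule argument) to pass from one point to the whole manifold. I would first try the $\mathbb{C}^*$-action given by the weights above. If its orbit structure is not enough, I would look for an LND of the form $\partial_\zeta$ corrected in $z_2$ on the locus $\{z_2=1\}$, following the Kaliman modification picture mentioned in the abstract.
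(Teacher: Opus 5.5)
Your smoothness argument is correct and is the paper's. The density part, however, has a genuine gap exactly where you locate the main obstacle, and the remedy you would try first cannot close it. Every field you write down preserves the ideal $(\zeta)$: $\nabla_1,\dots,\nabla_4$ kill $\zeta$, and the diagonal field $\theta$ with your weights satisfies $\theta(\zeta)=c\,\zeta$. Derivations preserving $(\zeta)$ form a Lie algebra, and the condition $V(\zeta)|_{\{\zeta=0\}}=0$ is closed in the compact-open topology. So the flows of your fields, the group they generate, and even the closure of the Lie algebra they generate all preserve, or are tangent to, the divisor $E=\{\zeta=0\}=\{\zeta=0,\ z_2=1\}\cong\cplx^3$. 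Hence the group is not transitive. Moreover, whatever modules Leuenberger's mechanism extracts from your pairs consist of fields tangent to $E$. An ``open orbit plus submodule'' variant cannot rescue this, because a Stein manifold with the density property has a transitive automorphism group.

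What is missing is a complete field $V$ with $V(\zeta)\notin(\zeta)$. The paper uses $V_5=(1+z_1\zeta)\bdy_\zeta-(mz_0^n\zeta^{m-1}+z_1z_2+z_3^2)\bdy_{z_2}$. This is what your ``$\bdy_\zeta$ corrected in $z_2$'' becomes once the denominator $1+z_1\zeta$ is cleared. It is not locally nilpotent, since $V_5(1+z_1\zeta)=z_1(1+z_1\zeta)$. Its completeness must therefore be proved by explicitly integrating the linear system $\zeta'=1+z_1\zeta$, $z_2'+z_1z_2=-(mz_0^n\zeta^{m-1}+z_3^2)$ with $z_0,z_1,z_3$ constant, which the paper does.

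The transverse field is needed for the other hypotheses of the criterion too. Your LND pairs all have second member tangent to the fibres of $\zeta$, so their values never span $T_pX$. Making a single vector generating requires an isotropy flow that mixes in the $\zeta$-direction. The paper uses one compatible pair $(V_1,V_4)=(\nabla_4,-\nabla_3)$ with $h=z_3$. Semicompatibility is immediate because $\Ker V_1\supset\cplx[\zeta,z_0,z_2]$ and $\Ker V_4\supset\cplx[\zeta,z_1,z_3]$. The paper then shows $V_4|_{p_0}$ is generating at $p_0=(1,0,0,\tfrac34,\tfrac12)$ via the isotropy flows of $z_0V_1$, $z_0V_2$ and $z_0V_5$.

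Full rank along $\{1+z_1\zeta=0\}$, where $\nabla_1,\nabla_2,\nabla_3$ all collapse to multiples of $\bdy_{z_2}$, is also not addressed in your outline. The paper obtains it from the extra LND $V_3=nz_0^{n-1}\zeta^{m-1}\bdy_{z_1}-z_2\bdy_{z_0}$ and the brackets $[V_1,V_4]$, $[V_1,V_5]$, $[V_3,V_5]$.

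There are also smaller inaccuracies:
\begin{itemize}
\item $\nabla_1$ is not locally nilpotent, since $\nabla_1^k(z_2)=\zeta^kz_2$; it is complete, being $\tfrac12[V_1,V_2]$.
\item ``Kernels of transcendence degree $3$ separating generic orbits'' is not the semicompatibility condition, which asks that the closure of the span of $(\Ker\nu)(\Ker\mu)$ contain a nonzero ideal.
\item For your (LND, $\theta$) pair, neither $\zeta$ nor $z_2$ works as $h$ in either order. For example, $\theta(\zeta)=c\zeta\notin\Ker\theta$, $\theta(z_2)=0$, and each $\nabla_j(z_2)$ is either $0$ or lies outside $\Ker\nabla_j$.
\end{itemize}
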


\begin{remark}
    The idea of the class of affine varieties as in \eqref{eq-Xmn} comes from looking at the Kaliman modification along a Brieskorn variety in McLean's paper \cite{McLean2009}, where he proved certain bound on the idempotent elements of symplectic homology, in particular, symplectic homology is nonzero. The exact form of Kaliman modification along certain Brieskorn varieties are given in the next theorem. A bihomorphism between two finite Stein manifolds (see Section~\ref{sec-prelims} for a definition) preserve the corresponding symplectic properties, i.e., the Weinstein structures arising from those Stein structure (see \cite{CiEli2012} for details about Weinstein structures). This was the crucial idea that led us to investigate the above class of affine varieties.
\end{remark}
The geometric and symplectic specialization is as follows.

\begin{theorem}\label{thm-identification}
For every integer $d\geq 3$, let
 $P_d(r,a,b,c)=r^d+ab+c^2$,
 $Z_d=P^{-1}(0)\subset\cplx^4$, and $p=(0,1,0,0)\in Z_d$.
Then, $X_{d-1,d}$ is isomorphic, as an affine variety, to the Kaliman modification
\[
 \mathcal{K}_4^d:=\Kalmod(\cplx^4,Z_d,\{p\}).
\]
\end{theorem}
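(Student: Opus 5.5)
The plan is to use the algebraic description of a Kaliman modification as an affine modification in the sense of Kaliman--Zaidenberg. For $X=\cplx^4$ with coordinate ring $A=\cplx[r,a,b,c]$, divisor $D=Z_d=\{f=0\}$ with $f=P_d$, and center $C=\{p\}$, the modification $\Kalmod(\cplx^4,Z_d,\{p\})$ is the affine variety $\operatorname{Spec}A[I/f]$. Here $I$ is the ideal generated by $f$ together with the ideal of $p$. Since $p\in Z_d$, we have $I=(r,a-1,b,c)$, and $A[I/f]$ is the subalgebra of $\Frac(A)$ generated by $A$ and
\[
u_0=\tfrac{r}{f},\quad u_1=\tfrac{a-1}{f},\quad u_2=\tfrac{b}{f},\quad u_3=\tfrac{c}{f}.
\]
Geometrically, this is the chart of the blow-up of $\cplx^4$ at $p$ on which $f$ generates the pulled-back ideal, with the proper transform of $Z_d$ removed. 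I will state this identification from the definition in Section~\ref{sec-prelims} and then work purely algebraically.

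Next I write down the candidate map. Set $\zeta=f$, $z_i=u_i$, so that $r=\zeta z_0$, $a=1+\zeta z_1$, $b=\zeta z_2$ and $c=\zeta z_3$. Substituting into $f=r^d+ab+c^2$ gives
\[
\zeta=\zeta^dz_0^d+(1+\zeta z_1)\zeta z_2+\zeta^2z_3^2 .
\]
Dividing by $\zeta$, which is legitimate in $\Frac(A)$, yields exactly $F_{d-1,d}(\zeta,z)=0$. So the assignment $(\zeta,z_0,\dots,z_3)\mapsto (f,u_0,\dots,u_3)$ defines a surjective homomorphism
\[
\phi:\cplx[\zeta,z_0,z_1,z_2,z_3]/(F_{d-1,d})\to A[I/f].
\]
Surjectivity holds because $r,a,b,c$ are polynomials in $\zeta$ and the $z_i$. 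Equivalently, the morphism $X_{d-1,d}\to\cplx^4$, $(\zeta,z)\mapsto(\zeta z_0,1+\zeta z_1,\zeta z_2,\zeta z_3)$, factors through the modification. Off $\{\zeta=0\}$ it is an isomorphism onto $\cplx^4\setminus Z_d$, with inverse $z_i=u_i$, $\zeta=f$.

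The main step is injectivity of $\phi$, that is, showing that $(F_{d-1,d})$ is the whole kernel. I will argue as follows.
\begin{itemize}
\item By Theorem~\ref{thm-density}, $X_{d-1,d}$ is smooth of dimension four. I also need it to be irreducible, so that $(F_{d-1,d})$ is prime. To see this, note that $F_{d-1,d}$ is linear in $z_2$ with coefficient $1+z_1\zeta$. Over $\{1+z_1\zeta\neq0\}$ the variety is therefore a graph, isomorphic to an open subset of $\cplx^4$.
\item The complement $\{1+z_1\zeta=0\}\cap X_{d-1,d}$ is cut out by two equations, so it has dimension at most three. By smoothness and equidimensionality it cannot form a separate component. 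Hence $X_{d-1,d}$ is an integral variety.
\item The kernel of $\phi$ is a prime ideal containing $(F_{d-1,d})$. Because $\phi$ becomes an isomorphism after inverting $\zeta$, both rings have the same dimension four. A prime ideal containing a prime ideal with quotients of equal dimension coincides with it, so $\phi$ is an isomorphism.
\end{itemize}

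As a sanity check, the exceptional divisor $\{\zeta=0\}$ is $\{z_2=1\}\cong\cplx^3$. This is consistent with the expected exceptional locus of a modification along a point of a hypersurface in $\cplx^4$.

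The obstacle I expect is the bookkeeping around the precise definition of $\Kalmod$ used in the paper. Depending on the convention, the center may be taken with respect to generators of $I$ chosen differently, or the modification may be defined via the blow-up with the proper transform removed. I would handle this by checking that the chart $\{f \text{ generates}\}$ of the blow-up of $\cplx^4$ at $p$ is precisely $\operatorname{Spec}A[I/f]$. The complement of the proper transform of $Z_d$ lies entirely in that chart, since on other charts the proper transform is given by the vanishing of the corresponding ratio $f/g$. This reduces the statement to the algebraic computation above.
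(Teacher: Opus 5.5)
Your proposal is correct and follows essentially the paper's proof of Proposition~\ref{prop:coordinate-ring-isomorphism}. You use the same substitution $\zeta=P_d$, $z_i=g_i/P_d$ to get a surjection onto $A_0[I_p/P_d]$, and injectivity comes from the kernel being a prime ideal containing the prime $(F_{d-1,d})$, with both quotients of dimension four. The only deviation is that you re-derive irreducibility geometrically, as a graph over $\{1+z_1\zeta\neq0\}$ plus a dimension count, whereas the paper simply invokes the Gauss-lemma argument of Proposition~\ref{prop:irreducible}. In your version, the bound of at most three really uses that $\zeta^{d-1}z_0^d+\zeta z_3^2-1$ does not vanish identically on $\{1+z_1\zeta=0\}$, and the primality of $(F_{d-1,d})$ uses smoothness to get reducedness.
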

After the linear change $a=u+iv$, $b=u-iv$, for $d=7$, this is the affine fourfold $K_4$ appearing in McLean's construction~\cite[Section~3.1]{McLean2009} (see Subsection~\ref{ss-McLean} for a brief description and Section~\ref{sec-Kaliman} for more details about Kaliman modification in our context).
Combining Theorems~\ref{thm-density} and~\ref{thm-identification} with McLean's results gives us a counterexample of the above conjecture.

\begin{theorem}\label{thm-main}
The smooth affine fourfold
\begin{equation}\label{eq-counter}
     X:=X_{6,7}
 =\left\{\zeta^6z_0^7+(1+z_1\zeta)z_2+\zeta z_3^2=1\right\}
\end{equation}
in $\cplx^5$ has the following properties:
\begin{itemize}
\item [(i)] $X$ is diffeomorphic to $\cplx^4$;
\item[(ii)] $X$ has the density property;
\item [(iii)] $X$ is not biholomorphic to $\cplx^4$.
\end{itemize}
\end{theorem}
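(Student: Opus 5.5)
The plan is to assemble the three properties from Theorems~\ref{thm-density} and~\ref{thm-identification} together with McLean's results on the fourfold $K_4$ of \cite[Section~3.1]{McLean2009}. Item (ii) is Theorem~\ref{thm-density} with $(m,n)=(6,7)$. For (i) and (iii) we first apply Theorem~\ref{thm-identification} with $d=7$. It identifies $X=X_{6,7}$ algebraically with $\mathcal{K}_4^7=\Kalmod(\cplx^4,Z_7,\{p\})$, where $Z_7=\{r^7+ab+c^2=0\}$ and $p=(0,1,0,0)$. After the linear substitution $a=u+iv$, $b=u-iv$, the defining polynomial becomes $r^7+u^2+v^2+c^2$, a Brieskorn polynomial of type $(7,2,2,2)$. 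Thus $\mathcal{K}_4^7$ is, as an affine variety, exactly McLean's $K_4$, and any property McLean proves for $K_4$ transfers to $X$.

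For (i) we quote McLean. I expect his argument to go as follows. The Kaliman modification of $\cplx^4$ along a smooth hypersurface $Z$ with a point $p$ is contractible, provided $Z$ is a $\mathbb{Z}$-homology sphere (in the appropriate sense) and the relevant divisor is topologically contractible. Here the Brieskorn link of $(7,2,2,2)$ is a homotopy sphere, since $d$ is odd; this is why the exponents are odd. The modification is then simply connected at infinity. Since $\dim_\rea X=8\geq 5$, the h-cobordism/Ramanujam-type recognition theorem (Dimca, Zaidenberg, \ldots) gives that $X$ is diffeomorphic to $\rea^8$. If the paper's identification only gives this up to isomorphism, we still only need that diffeomorphism type is an algebraic (hence smooth) invariant, which is immediate.

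For (iii) we use symplectic homology. McLean shows that $\SH(K_4)\neq 0$; more precisely, he bounds the number of idempotents, or growth rate, of $\SH$ of the completed Stein/Weinstein structure induced by any algebraic embedding. By contrast $\SH(\cplx^4)=0$. The main obstacle is that $\SH$ is a priori an invariant of the Weinstein/finite-type Stein structure, whereas (iii) concerns biholomorphisms. A biholomorphism $X\to\cplx^4$ need not carry the restricted algebraic plurisubharmonic exhaustion to one of finite type.

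To handle this we follow the remark after Theorem~\ref{thm-density}. Both $X$ and $\cplx^4$ carry finite-type Stein structures from algebraic embeddings, and McLean's invariance theorem says that the growth rate of $\SH$ and its vanishing are independent of the choice of finite-type Stein structure on a given complex manifold. More precisely, any two complete finite-type plurisubharmonic exhaustions on the same complex manifold give isomorphic $\SH$, via a Viterbo-type transfer/cofinality argument using sublevel sets nested in each other. Pulling back the standard exhaustion $|z|^2$ of $\cplx^4$ by a putative biholomorphism $\Phi\colon X\to\cplx^4$ gives a second finite-type structure on $X$ with vanishing $\SH$. This contradicts McLean's nonvanishing for the algebraic structure on $K_4\cong X$. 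Verifying that the cited invariance applies to the pulled-back exhaustion (completeness of the Liouville flow, finiteness of critical points) is the step requiring the most care. Since $|z|^2\circ\Phi$ has a single critical point and is exhausting, it is manifestly of finite type, so the argument should go through.
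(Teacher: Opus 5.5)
Your proposal follows the paper's route. Part (ii) is Theorem~\ref{thm-density} with $(m,n)=(6,7)$. Part (i) comes from the identification $X_{6,7}\cong K_4$ together with McLean's contractibility and $h$-cobordism argument. Part (iii) combines $\SH(K_4;\F_2)\neq0$, $\SH(\cplx^4;\F_2)=0$ and biholomorphic invariance of $\SH$, obtained by pulling back a finite-type exhaustion exactly as in Proposition~\ref{prop:biholo-SH-invariance}.

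One small correction to your sketch of (i): $Z_7$ is not smooth, since it is singular at the origin. What Kaliman's contractibility criterion actually uses is that $Z_7$ is homeomorphic to $\rea^6$ (its link is $S^5$ because $7$ is odd) and that $p$ lies in its smooth locus.
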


Actually, there is a family of smooth algebraic varieties of which each of them is a counterexample to the T\'{o}th-Varolin conjecture. Here we present:

\begin{theorem}\label{thm-counterfamily}
    For $d\geq 2$ odd positive integer, the fourfold $X_{d-1,d}$ satisfies (i), (ii) and (iii) of Theorem~\ref{thm-main}.
\end{theorem}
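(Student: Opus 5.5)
The plan is to run the argument for Theorem~\ref{thm-main} with $d=7$ replaced by an arbitrary odd $d\geq 3$. (For $d$ odd with $d\geq 2$ the smallest case is $d=3$, so $X_{d-1,d}=X_{2,3}$ and $m=d-1\geq 2$, $n=d\geq 3$; Theorem~\ref{thm-density} therefore applies throughout.) Property (ii) is then immediate from Theorem~\ref{thm-density}. For (i) and (iii) we first use Theorem~\ref{thm-identification} to identify $X_{d-1,d}$ algebraically with the Kaliman modification $\mathcal{K}_4^d=\Kalmod(\cplx^4,Z_d,\{p\})$. This reduces both properties to statements about the Kaliman modification along the Brieskorn variety $Z_d=\{r^d+ab+c^2=0\}$. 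After the substitution $a=u+iv$, $b=u-iv$ this is $\{r^d+u^2+v^2+c^2=0\}$, the Brieskorn variety of type $(d,2,2,2)$.

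For (i), we follow McLean's topological argument~\cite[Section~3.1]{McLean2009}, which works for any exponent with the right link properties. The hypersurface $Z_d$ has an isolated singularity at the origin. Its link is the Brieskorn manifold $\Sigma(d,2,2,2)$, which for $d$ odd is a homotopy $5$-sphere, hence a standard $S^5$ by Brieskorn's criterion (the Alexander polynomial evaluated at $1$ is $\pm1$ exactly when $d$ is odd). The smooth part $Z_d\setminus\{0\}$ and the fibre through $p$ are contractible or acyclic in the required degrees. McLean's argument then shows that the Kaliman modification $\mathcal{K}_4^d$ is a smooth contractible affine fourfold whose boundary at infinity is simply connected. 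By the h-cobordism theorem in real dimension $8$ (Ramanujam--Dimca style), it is diffeomorphic to $\cplx^4$. I would check that each topological input McLean uses for $d=7$ depends only on $d$ being odd: this is the acyclicity of the link, i.e.\ $\Delta(1)=\pm1$ for the characteristic polynomial of the $(d,2,2,2)$ monodromy.

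For (iii), we use symplectic homology. A biholomorphism between finite-type Stein manifolds induces a deformation equivalence of the associated Weinstein structures, so $\SH$ is a biholomorphic invariant. Since $\SH(\cplx^4)=0$, it suffices to show $\SH(\mathcal{K}_4^d)\neq 0$. McLean computes $\SH$ of the Kaliman modification via a spectral sequence built from a compactification by normal crossing divisors. The key input is that the link singularity $(d,2,2,2)$ has nontrivial local Floer homology, equivalently nonzero Conley--Zehnder index data, which forces a nonzero idempotent to survive in $\SH$.

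The main obstacle is this last step: verifying that McLean's index and grading computation, carried out for $d=7$, gives a nonvanishing class for every odd $d\geq3$. The first thing I would try is to redo his grading estimate with $d$ as a parameter. The Reeb orbits of $\Sigma(d,2,2,2)$ have Conley--Zehnder indices computable from the weights $(1/d,1/2,1/2,1/2)$, and the point is to confirm that the spectral sequence degenerates in a range where a nonzero class sits. The case $d=3$ is the tightest, so I expect it to need separate care if the general estimate is borderline there.
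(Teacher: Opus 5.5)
\textbf{Parts (i) and (ii).} These follow the paper. Part (ii) is Theorem~\ref{thm-density} with $m=d-1\geq 2$ and $n=d\geq 3$. Part (i) is Theorem~\ref{thm-identification} combined with Result~\ref{res-r8}, that is, Brieskorn's criterion for $\Sigma(d,2,2,2)$, contractibility of the modification, and the $h$-cobordism argument.

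One inaccuracy in your version of (i): $Z_d\setminus\{0\}$ is homotopy equivalent to the link $S^5$, so it is not contractible, and no ``fibre through $p$'' enters the hypotheses. Kaliman's criterion (Result~\ref{res-kalmodcontractible}) uses two facts. First, $Z_d$ is the cone on its link $S^5$ and hence homeomorphic to $\rea^6$. Second, the center $\{p\}$ is a point of $(Z_d)_{\mathrm{reg}}$.

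\textbf{The genuine gap is (iii).} You reduce it, as the paper does, to $\SH(\mathcal{K}_4^d;\F_2)\neq 0$, but you do not prove this. Your step ``nontrivial local Floer homology of the $(d,2,2,2)$ singularity forces a nonzero idempotent to survive'' is exactly what needs proof. Nothing in your sketch rules out differentials killing every class. For comparison, the standard ball has Morse--Bott Reeb families with nonzero local Floer homology, yet $\SH(\cplx^4)=0$.

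There is also a more specific obstruction to your plan. The Reeb dynamics on $\Sigma(d,2,2,2)$ with weights $(1/d,1/2,1/2,1/2)$ computes $\SH$ of the affine Milnor fibre $P_d^{-1}(1)$. By \eqref{eq:P-pi=x} and \eqref{eq:off-exceptional-isomorphism}, these fibres are the generic fibres both of $\zeta:\mathcal{K}_4^d\to\cplx$ and of $P_d:\cplx^4\to\cplx$. Since $\SH(\cplx^4)=0$, fibre data alone cannot separate $\mathcal{K}_4^d$ from $\cplx^4$. Any argument must use what the modification does to the total space.

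The paper does not close this step either. For $d=7$ it quotes McLean's idempotent bound $1<i(K_4)<\infty$ (Result~\ref{res-idempotents}, via his Theorems~2.31 and~7.7). For general odd $d$ it only asserts \eqref{eq-SH-nonzero}, ``proceeding similarly as in McLean''. A complete proof of (iii) along these lines would require checking McLean's hypotheses for the $(d,2,2,2)$ Kaliman modification for every odd $d\geq 3$, including the tightest case $d=3$. Neither your proposal nor the paper carries this out.
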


Some comments on the layout of the paper: Section~\ref{sec-prelims} collects some results that will be used, a brief description of Kaliman modification and biholomorphic invariance of symplectic homology for certain Stein manifolds. Section~\ref{sec-family} is short and describes the smoothness and irreducibility of $X_{m,n}$. We prove the density property for $X_{m,n}$ in Section~\ref{sec-density}. 
In section~\ref{sec-Kaliman}, Theorem~\ref{thm-identification} is proved. Section~\ref{sec-symplectic} gives the complete proof of Theorem~\ref{thm-main}.

\section{Technical Preliminaries}\label{sec-prelims}

\subsection{The density property}
Let $M$ be a complex manifold.  We denote by $\holo(M)$ the space of homorphic function on $M$ and let $\vfhol(M)$ denote 
the Fr\'echet Lie algebra of holomorphic vector fields on $M$, with the compact-open topology on coefficients in local charts.

\begin{definition}
A holomorphic vector field $V$ on $M$ is \emph{complete} if its holomorphic flow
\[
 \Phi^V:\cplx\times M\longrightarrow M
\]
is defined for every complex time.  The manifold $M$ has the \emph{holomorphic density property} if
 the Lie algebra generated by complete holomorphic vector fields on $M$ is dense in $\vfhol(M)$.

\end{definition}

The main tool for showing density property in this paper is a theorem by Leuenberger\cite{Leuenberger2016}. We now recall the holomorphic formulation used in that paper~\cite[Definitions~2.4, 2.8 and 2.10]{Leuenberger2016}.

\begin{definition}\label{def:generating}
Fix $p\in M$. We call a subset $S\subset T_pM$ a \emph{generating set} if the orbit of $S$ under the differential action of the isotropy subgroup
\[
 \aut(M)_p:=\{\Phi\in\aut(M):\Phi(p)=p\}
\]
contains a basis of $T_pM$.
\end{definition}

\begin{definition}\label{def:compatible}
Let $\nu$ and $\mu$ be complete holomorphic vector fields on a Stein manifold $M$.
 The pair $(\nu,\mu)$ is \emph{semicompatible} if the closure of
\[
 \Span\left\{(\Ker\nu)(\Ker\mu)\right\}
\]
in $\holo(M)$ in the compact-open topology contains a nonzero ideal $I\subset\holo(M)$.
A semicompatible pair is \emph{compatible} if there exists $h\in\holo(M)$ such that
\[
 h\in\Ker\mu,
 \qquad
 0\neq\nu(h)\in\Ker\nu.
\]

\end{definition}

The following result by Leuenberger \cite[Theorem~1.2(1)]{Leuenberger2016} (see also 
\cite{KaKu2015}) will be vital for proving density property for the hypersurfaces $X_{m,n}$. 

\begin{result}\label{crit:leuenberger}
Let $M$ be a Stein manifold on which $\aut(M)$ acts transitively.  Suppose there are compatible pairs $(\nu_j,\mu_j)$ and a point $p\in M$ such that the vectors $\mu_j[p]$ form a generating set of $T_pM$.  Then $M$ has the holomorphic density property.
\end{result}

For completeness, the mechanism behind Result~\ref{crit:leuenberger}  is worth recalling.  If $I_j$ is the ideal associated with a compatible pair and $h_j$ is the compatibility function, then Leuenberger's commutator identity places the $\holo(M)$-module generated by
\[
 I_j\,\nu_j(h_j)\,\mu_j
\]
in the closure of the Lie algebra generated by complete fields.  Transitivity and the generating condition then promote these modules to all holomorphic vector fields by Cartan's theorems and a Nakayama-type argument.

A standard orbit argument will be used repeatedly.

\begin{lemma}\label{lem:orbit}
Let $M$ be a connected complex manifold and let $\mathscr V$ be a collection of complete holomorphic vector fields.  Let $G\subset\aut(M)$ be the subgroup generated by their flows.  If the evaluations at every point of the Lie algebra generated by $\mathscr V$ span the full tangent space, then every $G$-orbit is open.  Consequently $G$ acts transitively on $M$.
\end{lemma}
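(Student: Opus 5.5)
The plan is the standard one: the Lie algebra generated by $\mathscr V$ is sent by the flows into vector fields whose flows stay in $G$, so the orbit map of $G$ at any point has image containing a neighbourhood of that point. Concretely, fix $x\in M$ and let $\mathfrak g$ be the Lie algebra generated by $\mathscr V$. Since $\mathfrak g_x:=\{V(x):V\in\mathfrak g\}=T_xM$, we can choose $V_1,\dots,V_n\in\mathfrak g$ (with $n=\dim M$) whose values at $x$ form a basis. The difficulty is that the $V_i$ are iterated brackets, not necessarily complete, so their flows need not lie in $G$. So the first step is to pass from brackets back to flows of elements of $\mathscr V$.

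I would use the classical orbit theorem argument (Sussmann/Nagano style). Let $O=G\cdot x$ and consider maps of the form
\[
\Psi(t_1,\dots,t_k)=\Phi^{W_1}_{t_1}\circ\cdots\circ\Phi^{W_k}_{t_k}(x),\qquad W_j\in\mathscr V,
\]
which are holomorphic maps $\cplx^k\to M$ with image in $O$. Choose such a map and a point $t^0$ where the rank of $d\Psi$ is maximal among all such maps and all points; call this rank $r$. By the constant rank theorem the image near $t^0$ is an $r$-dimensional submanifold $N$ through $y=\Psi(t^0)$. The key claim is that every $W\in\mathscr V$ is tangent to $N$ near $y$: otherwise composing $\Psi$ with one further flow $\Phi^W_s$ would produce a map of rank $r+1$, contradicting maximality. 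Since $\mathscr V$ is tangent to $N$, so are all brackets, hence $\mathfrak g_y\subset T_yN$. Moreover $\mathfrak g$ is invariant under pushforward by elements of $G$ up to closure issues; more simply, apply the hypothesis directly at $y$: $\mathfrak g_y=T_yM$ by assumption (it holds at every point). Hence $r=n$ and $N$ is an open neighbourhood of $y$ contained in $O$. Translating by an element $g\in G$ with $g(y)=x$ (namely the inverse of the composed flows) shows $O$ contains a neighbourhood of $x$; as $x$ was arbitrary, and orbits of different points coincide or are disjoint, every orbit is open.

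Finally, $M$ is the disjoint union of the $G$-orbits, each open; hence each orbit is also closed (its complement is a union of open orbits). Connectedness of $M$ forces a single orbit, i.e. $G$ acts transitively.

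The main obstacle is the tangency step: showing that if some $W\in\mathscr V$ were transverse to $N$ at points near $y$ then rank increases. I would handle it by noting $\Phi^W_s\circ\Psi$ has differential at $(0,t)$ spanned by $d\Psi(T)$ and $W(\Psi(t))$; if $W(\Psi(t))\notin T_{\Psi(t)}N$ for some $t$ near $t^0$, rank $r+1$ is attained, contradiction. Since brackets of fields tangent to a submanifold remain tangent, $\mathfrak g_y\subset T_yN$, completing the argument.
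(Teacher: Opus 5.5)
Your proof is correct, but it takes a different route from the paper's. The paper's proof is a sketch that uses the Sussmann--Nagano orbit theorem as a black box. It assumes each $G$-orbit is an immersed submanifold, notes that the orbit's tangent spaces contain the fields of $\mathscr V$ and are closed under brackets, and concludes that full rank makes every orbit open.

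You prove the openness directly. You take a composition of flows $\Psi$ and a point $t^0$ of maximal rank $r$. Maximality together with lower semicontinuity of rank gives constant rank near $t^0$, so the constant rank theorem produces an embedded local piece $N$ of the orbit through $y=\Psi(t^0)$. The rank-increase trick with $(s,t)\mapsto\Phi^W_s(\Psi(t))$ then shows every $W\in\mathscr V$ is tangent to $N$, and hence so is every bracket. The hypothesis at $y$ forces $r=\dim M$. Translating by the inverse composition of flows gives a neighbourhood of $x$ inside $G\cdot x$.

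This is the key step in the proof of the orbit theorem, specialized to the full-rank case. It only uses the constant rank theorem and the fact that fields tangent to an embedded submanifold have tangent brackets. It never needs the global immersed-submanifold structure of orbits, which is the nontrivial content behind the paper's phrase ``the tangent space to a $G$-orbit.'' The paper gains brevity by citing that theorem implicitly; your argument is self-contained and works equally for smooth complete fields with real time.

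Two small remarks. Your aside about $G$-invariance of $\mathfrak g$ ``up to closure issues'' is unnecessary, and you rightly drop it by applying the hypothesis directly at $y$. Also, state explicitly that maximality of $r$ is what gives constant rank near $t^0$.
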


\begin{proof}
The tangent space to a $G$-orbit contains the evaluations of the original fields and is stable under Lie brackets; equivalently, the orbit distribution contains the evaluation of the Lie algebra they generate.  Full rank therefore makes every orbit an immersed open submanifold of $M$.  Distinct orbits are disjoint open subsets.  Connectedness of $M$ forces the existence of only one orbit.
\end{proof}

The follwing couple of lemmas are well known.
\begin{lemma}\label{lem-replica}
   If $V$ is complete and $f\in\Ker V$, then $fV$ is complete, with flow
\begin{equation}\label{eq:replica-flow}
 \Phi_t^{fV}(p)=\Phi_{t f(p)}^V(p).
\end{equation} 
\end{lemma}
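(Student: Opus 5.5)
The plan is to write down the candidate flow explicitly and verify the flow axioms directly. Let $\Phi^V:\cplx\times M\to M$ be the holomorphic flow of the complete field $V$. Define $\Psi:\cplx\times M\to M$ by $\Psi(t,p)=\Phi^V(tf(p),p)$, which is the right-hand side of \eqref{eq:replica-flow}. Since $\Phi^V$ is defined for all complex times and $f$ is holomorphic, $\Psi$ is defined on all of $\cplx\times M$ and is holomorphic, being a composition of holomorphic maps. The task is then to show that $\Psi$ is the flow of $fV$. Uniqueness of integral curves of holomorphic ODEs will then give the completeness of $fV$, with $\Psi=\Phi^{fV}$.

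\textbf{Step 1 ($f$ is constant along $V$-orbits).} From $f\in\Ker V$, i.e.\ $V(f)=0$, differentiate $s\mapsto f(\Phi^V_s(p))$. The derivative is $(Vf)(\Phi^V_s(p))=0$. Since $\cplx$ is connected and the map is holomorphic in $s$, we get $f(\Phi^V_s(p))=f(p)$ for all $s\in\cplx$. This is the only point where the hypothesis enters, and it is the main (though mild) point to handle carefully: the holomorphic vanishing of the derivative in the complex variable $s$ gives constancy on the whole complex line.

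\textbf{Step 2 (initial condition and ODE).} Clearly $\Psi(0,p)=\Phi^V_0(p)=p$. By the chain rule in $t$,
\[
\frac{\partial}{\partial t}\Psi(t,p)=f(p)\,V\bigl(\Phi^V_{tf(p)}(p)\bigr).
\]
By Step 1, $f(p)=f\bigl(\Phi^V_{tf(p)}(p)\bigr)=f(\Psi(t,p))$. Hence
\[
\frac{\partial}{\partial t}\Psi(t,p)=(fV)(\Psi(t,p)),
\]
so $t\mapsto\Psi(t,p)$ is an integral curve of $fV$ through $p$ defined on all of $\cplx$.

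\textbf{Step 3 (conclusion).} By uniqueness of solutions of the holomorphic ODE $\dot x=(fV)(x)$, the local flow of $fV$ agrees with $\Psi$ wherever it is defined. Since $\Psi$ is globally defined, $fV$ is complete and $\Phi^{fV}_t=\Psi_t$. If desired, the group law $\Psi_{t+s}=\Psi_t\circ\Psi_s$ can be checked directly. Again by Step 1, $\Psi_t\circ\Psi_s(p)=\Phi^V_{tf(p)}\bigl(\Phi^V_{sf(p)}(p)\bigr)=\Phi^V_{(t+s)f(p)}(p)$. Uniqueness, however, already makes this automatic.
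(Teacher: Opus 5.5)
Your proof is correct: Step 1, the constancy of $f$ along $V$-orbits, is what makes $t\mapsto\Phi^V_{tf(p)}(p)$ an entire integral curve of $fV$ through $p$, and uniqueness then identifies it with the flow of $fV$. The paper gives no proof of this lemma, only citing it as well known, and your argument is the standard verification that fills this in.
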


The following elementary linearization will be used to verify the generating condition.

\begin{lemma}\label{lem:replica-derivative}
Let $V$ be a complete holomorphic vector field, let $f\in\Ker V$, and let $p\in M$ satisfy $f(p)=0$.  Then the flow of $fV$ fixes $p$, and
\begin{equation}\label{eq:replica-derivative}
 d\Phi_t^{fV}\big|_p(v)
 =v+t\,df_p(v)V|_p,
 \qquad v\in T_pM.
\end{equation}
\end{lemma}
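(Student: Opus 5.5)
The plan is to differentiate the flow formula of Lemma~\ref{lem-replica} directly at the point $p$. First, since $f(p)=0$, formula \eqref{eq:replica-flow} gives $\Phi_t^{fV}(p)=\Phi^V_{t f(p)}(p)=\Phi^V_0(p)=p$ for all $t\in\cplx$, so the flow of $fV$ fixes $p$. Completeness of $fV$ is already provided by Lemma~\ref{lem-replica}; it uses that $f$ is constant along integral curves of $V$, i.e.\ $f\circ\Phi^V_s=f$ for all $s$.

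For the derivative, fix $t$ and write the map $q\mapsto\Phi_t^{fV}(q)$ as the composition of $q\mapsto (tf(q),q)\in\cplx\times M$ with the global flow $\Phi^V:\cplx\times M\to M$, $(s,q)\mapsto\Phi^V_s(q)$. Apply the chain rule at the point $(0,p)$. The partial derivative of $\Phi^V$ in the time variable $s$ at $(0,p)$ is $V|_p$. The partial derivative in the space variable is $d(\Phi^V_0)_p=\mathrm{id}_{T_pM}$, because $\Phi^V_0=\mathrm{id}_M$. The differential of $q\mapsto tf(q)$ at $p$ is $t\,df_p$. Combining these, for $v\in T_pM$ we get
\[
d\Phi_t^{fV}\big|_p(v)=\mathrm{id}(v)+t\,df_p(v)\,V|_p ,
\]
which is \eqref{eq:replica-derivative}.

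Two points need care. First, $\Phi^V$ must be holomorphic jointly in $(s,q)$ on all of $\cplx\times M$, which holds since $V$ is complete. Second, the chain rule must be evaluated exactly at $s=tf(p)=0$; this is the only place the hypothesis $f(p)=0$ enters the derivative formula. Otherwise the space-derivative would be $d(\Phi^V_{tf(p)})_p$ rather than the identity, and the time-derivative would be $V$ at $\Phi^V_{tf(p)}(p)$ rather than at $p$.

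There is no real obstacle; the only subtle step is justifying the splitting of the differential into time and space partials, which is the standard chain rule for the smooth map $\Phi^V$.
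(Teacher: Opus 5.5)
Your proof is correct, but it takes a different route from the paper. The paper does not differentiate the replica flow formula. It notes that $fV$ vanishes at $p$ and computes the linearization there, $N(v)=df_p(v)\,V|_p$. It then checks that $N^2(v)=df_p(v)\,(Vf)(p)\,V|_p=0$ because $V(f)=0$. Finally it invokes the general fact that the differential of a flow at a fixed point is $\exp(tN)$, which collapses to $I+tN$ by nilpotency.

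You instead apply the chain rule to $q\mapsto\Phi^V(tf(q),q)$ at $(0,p)$. The formula then comes out as an exact first-order computation. You need neither the ODE fact about linearized flows at zeros nor a nilpotency check. The hypothesis $V(f)=0$ enters only through the validity of \eqref{eq:replica-flow}, and $f(p)=0$ only through evaluating at time $s=0$.

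The paper's argument is intrinsic to the single field $fV$ and explains why the isotropy action is unipotent, namely that the linear part of $fV$ squares to zero. Yours is shorter and more elementary once Lemma~\ref{lem-replica} is available.
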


\begin{proof}
Since $f(p)=0$, $(fV)|_p=0$. The point $p$ remains fixed under the flows of $fV$.
The differential of $fV$ at $p$ is the rank-one map
\[
 N(v):=df_p(v)V|_p.
\]
Since $V(f)=0$, we get that $N^2=0$.  The differential of the flow at the fixed point is therefore
\[
 \exp(tN)=I+tN,
\]
which is exactly~\eqref{eq:replica-derivative}.
\end{proof}

Next, we mention a simple lemma from linear algebra. This will be used in our proof of Theorem~\ref{thm-density}.
\begin{lemma}\label{lem:koszul}
Let $(a,b,c)\in\cplx^3$ be a nonzero vector.  The vectors
\begin{equation}\label{eq:koszul-vectors}
 u_1=(-c,0,a),
 \qquad
 u_2=(-b,a,0),
 \qquad
 u_3=(0,-c,b)
\end{equation}
lie in the kernel of the linear functional
\[
 (\xi,\eta,\zeta)\longmapsto a\xi+b\eta+c\zeta
\]
and span that two-dimensional kernel.
\end{lemma}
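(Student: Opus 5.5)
The plan is to verify the two claims of Lemma~\ref{lem:koszul} directly: each $u_i$ lies in the kernel, and together they span it. The rank count is the only step needing care.

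\emph{Kernel membership.} Evaluate the functional $(\xi,\eta,\zeta)\mapsto a\xi+b\eta+c\zeta$ on each vector:
\[
 a(-c)+c\,a=0,\qquad a(-b)+b\,a=0,\qquad b(-c)+c\,b=0.
\]
So $u_1,u_2,u_3$ all lie in the kernel. Since $(a,b,c)\neq 0$, the functional is nonzero, and by rank--nullity its kernel is two-dimensional.

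\emph{Spanning.} It remains to show that $\Span\{u_1,u_2,u_3\}$ has dimension two. I would argue by cases on which coordinate of $(a,b,c)$ is nonzero.
\begin{itemize}
\item If $a\neq0$, then $u_1=(-c,0,a)$ and $u_2=(-b,a,0)$ are independent. This can be seen from the $2\times2$ minor in the last two coordinates, which equals $-a^2\neq 0$.
\item If $a=0$ but $b\neq0$, then $u_2=(-b,0,0)$ and $u_3=(0,-c,b)$. These are independent because $u_2$ has nonzero first coordinate and $u_3$ has nonzero third coordinate, so neither is a multiple of the other.
\item If $a=b=0$, then $c\neq0$, and $u_1=(-c,0,0)$ and $u_3=(0,-c,0)$ are clearly independent.
\end{itemize}
In every case two of the $u_i$ are independent vectors in a two-dimensional space, so they span the kernel.

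An alternative, case-free route is to note that the $u_i$ are, up to sign, the cross products $(a,b,c)\times e_j$. Their span is therefore the image of the map $w\mapsto (a,b,c)\times w$, which has rank two whenever $(a,b,c)\neq0$.

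There is no real obstacle. The only point needing care is the case split that guarantees two of the $u_i$ are independent, and it is handled above.
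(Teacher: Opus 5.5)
Your proposal is correct and follows essentially the same route as the paper: direct verification of kernel membership, then a case split on whether $a\neq0$ (using $u_1,u_2$) or $a=0$ (using $u_3$ together with whichever of $u_1=(-c,0,0)$, $u_2=(-b,0,0)$ is nonzero). Your cross-product remark is a valid case-free alternative, since over $\cplx$ the map $w\mapsto(a,b,c)\times w$ has kernel exactly the line spanned by $(a,b,c)$ and hence rank two.
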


\begin{proof}
Clearly the vectors lie in the kernel of the linear functional $(\xi,\eta,\zeta)\longmapsto a\xi+b\eta+c\zeta$. 
If $a\neq0$, then $u_1$ and $u_2$ are linearly independent. If $a=0$, then $(b,c)\neq(0,0)$.  The vector $u_3=(0,-c,b)$ is nonzero, and one of $u_1=(-c,0,0)$ or $u_2=(-b,0,0)$ supplies the other independent vector so that the span of $u_1,u_2$ and $u_3$ becomes two dimensional subspace. 
\end{proof}

Following lemma is quite well known.
\begin{lemma}\label{lem:polynomial-dense}
The coordinate ring $\cplx[X_{m,n}]$ is dense in $\holo(X_{m,n})$ in the compact-open topology.
\end{lemma}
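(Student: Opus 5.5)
The plan is to combine the Oka--Weil/Cartan density of polynomials on $\cplx^5$ with the closedness of $X_{m,n}$ in $\cplx^5$. First I will record that $X_{m,n}$ is a smooth closed submanifold of $\cplx^5$. By smoothness (established in Section~\ref{sec-family}, or directly from $\partial F_{m,n}/\partial z_2=1+z_1\zeta$, $\partial F_{m,n}/\partial z_1=\zeta z_2$, and so on), $X_{m,n}$ is a closed complex submanifold of the Stein manifold $\cplx^5$, hence itself Stein.

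\textbf{Step 1 (extension).} Given $f\in\holo(X_{m,n})$, Cartan's Theorem B applied to the coherent ideal sheaf $\mathcal I_{X_{m,n}}$ in the exact sequence
\[
0\to\mathcal I_{X_{m,n}}\to\holo_{\cplx^5}\to\holo_{X_{m,n}}\to0
\]
gives $H^1(\cplx^5,\mathcal I_{X_{m,n}})=0$. Hence the restriction map $\holo(\cplx^5)\to\holo(X_{m,n})$ is surjective, and there is $F\in\holo(\cplx^5)$ with $F|_{X_{m,n}}=f$.

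\textbf{Step 2 (approximation).} Fix a compact set $K\subset X_{m,n}$ and $\varepsilon>0$. Since $K$ is compact in $\cplx^5$, it lies in a closed ball $B$. Truncating the Taylor series of the entire function $F$ gives a polynomial $P\in\cplx[\zeta,z_0,\dots,z_3]$ with $\sup_B|F-P|<\varepsilon$. Then $P|_{X_{m,n}}\in\cplx[X_{m,n}]$ and $\sup_K|f-P|<\varepsilon$. Since the compact-open topology on $\holo(X_{m,n})$ is the topology of uniform convergence on compacts, this proves density.

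The only point needing care is Step 1, namely that the ideal sheaf is coherent so that Theorem B applies. Oka's coherence theorem gives this for any analytic subset. Irreducibility of $X_{m,n}$ is not needed, and reducedness holds because $X_{m,n}$ is smooth, so $F_{m,n}$ generates the ideal locally.
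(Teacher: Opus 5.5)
Your proposal is correct and follows the paper's own proof essentially verbatim. Like the paper, you extend $f\in\holo(X_{m,n})$ to an entire function via Cartan's Theorem~B applied to $0\to\mathcal I_X\to\holo_{\cplx^5}\to\holo_X\to 0$. You then restrict the Taylor polynomials of that extension, which converge uniformly on compact subsets and lie in $\cplx[X_{m,n}]$.
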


\begin{proof}
The variety $X:=X_{m,n}$ is a closed complex submanifold of $\cplx^5$.  By Cartan's Theorem~B applied to the exact sequence
\[
 0\longrightarrow\mathcal I_X
 \longrightarrow\holo_{\cplx^5}
 \longrightarrow\holo_X
 \longrightarrow0,
\]
every holomorphic function on $X$ extends to an entire function on $\cplx^5$.  The Taylor polynomials of the extension converge uniformly on compact subsets of $\cplx^5$, hence their restrictions converge uniformly on compact subsets of $X$.  These restrictions lie in $\cplx[X]$.
\end{proof}

\subsection{Kaliman modifications}
This notion was introduced by Kaliman \cite{Kaliman1994} in 1994. It produced many exotic algebraic varieties.
Consider complex algebraic subvarieties $C$ and $D$ of a smooth algebraic variety $M$ such that $C$ is smooth, $C\subset D$ and $C$ is irreducible. Assume $\dim C<\dim D$. Then the Kaliman modification of the triple $(M,D,C)$ is defined as 
\[
\Kalmod (M,D,C)=\wtil{M}\setminus\wtil{D},
\]
where $\wtil{M}$ is the blow up of $M$ along the variety $C$ and $\wtil{D}$ is the strict/proper transform of $D$ in $\wtil{M}$. In algebraic terms:
let $A$ be a finitely generated $\cplx$-algebra, let $f\in A$ define an irreducible divisor $D$, and let $I\subset A$ be an ideal containing $f$, defining a center $C\subset D$.  The Kaliman modification has coordinate ring
\begin{equation}\label{eq:affine-modification-ring}
 A[I/f]
 :=A\left[\frac{g}{f}:g\in I\right]
 \subset A[f^{-1}].
\end{equation}

The following couple of results from \cite{Kaliman1994} will be very useful for us.
\begin{result}[Kaliman]\label{res-Kaliman}
    The Kaliman modification of an affine variety is an affine variety.
\end{result}

\begin{result}\cite[Theorem~3.5]{Kaliman1994}\label{res-kalmodcontractible}
    Let $M$ be a smooth contractible affine algebraic variety of dimension $\geq 2$ and $D$ be an irreducible hypersurface such that either $D$ is smooth contractible or $D$ is homeomorphic to $\rea^{2k}$ for some $k\in\natu$. Suppose $C$ is a contractible subvariety of $M$ and $C\subset D_{reg}$. Then $\Kalmod(M,D,C)$ is contractible.
\end{result}

\subsection{Finite-type Stein structures and biholomorphic invariance of symplectic homology}
\begin{definition}
    A Stein manifold is said to be finite-type if it admits an exhausting strictly plurisubharmonic Morse function with finitely many critical points. 
\end{definition}
A finite-type Stein manifold is denoted by $(X,J,\phi)$, where $(X,J)$ is a Stein manifold and $\phi$ is strictly plurisubharmonic Morse function with finitely many critical point such that 
for all $c\in\rea$, sublevel set $\{z\in X: \phi(z)\leq c\}$ is compact.
 Clearly, sufficiently large regular sublevel sets of the strictly plurisubharmonic Morse function are Liouville domains, and their completions determine a canonical exact symplectic manifold up to symplectomorphism.  McLean records the following facts~\cite[Section~2.1]{McLean2012}:
\begin{itemize}
\item any two complete finite-type strictly plurisubharmonic exhaustions on the same Stein manifold determine symplectomorphic completions;
\item the resulting symplectic form is a biholomorphic invariant;
\item every smooth affine variety is a finite-type Stein manifold.
\end{itemize}
Consequently, symplectic homology and its ring structure can be used as biholomorphic invariants for smooth affine varieties.

We shall also use the standard vanishing theorem for subcritical Stein manifolds: their symplectic homology vanishes~\cite{Cieliebak2002}. Since the function  $z\to |z|^2$ has only one critical point of Morse index zero, in particular, we obtain that
\begin{equation}\label{eq:SH-C4}
 \SH(\cplx^4;\F_2)=0,
\end{equation}
.

The biholomorphic invariance of symplectic homology rings---the symplectic structures induce from the underlined Stein structures---plays crucial role in our proof.
The following proposition isolate the exact implication needed for our argument.

\begin{proposition}\label{prop:biholo-SH-invariance}
Let $M$ and $N$ be finite-type Stein manifolds.  If $M$ and $N$ are biholomorphic, then their canonical completed exact symplectic manifolds are symplectomorphic.  Consequently their symplectic-homology rings are isomorphic.
\end{proposition}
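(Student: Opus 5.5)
The plan is to reduce the statement to the two facts McLean records in \cite[Section~2.1]{McLean2012}, listed just above. Let $\Psi\colon M\to N$ be a biholomorphism and let $\phi_N$ be a complete finite-type strictly plurisubharmonic Morse exhaustion on $N$. The first step is to pull it back: $\phi_N\circ\Psi$ is an exhausting strictly plurisubharmonic Morse function on $M$. Strict plurisubharmonicity is preserved because $\Psi$ is holomorphic, so $dd^c(\phi_N\circ\Psi)=\Psi^*dd^c\phi_N$. Properness, the Morse condition and the finiteness of the critical set are preserved because $\Psi$ is a diffeomorphism. Moreover, $\Psi$ is a symplectomorphism
\[
(M,\,dd^c(\phi_N\circ\Psi))\to(N,\,dd^c\phi_N),
\]
and it intertwines the Liouville vector fields (the gradients of the potentials), so it carries regular sublevel sets to regular sublevel sets. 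Hence $\Psi$ identifies the Liouville domains, and therefore their completions.

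The second step is to compare this pulled-back structure with an arbitrary chosen exhaustion $\phi_M$ on $M$. By the first bullet point (any two complete finite-type exhaustions on the same Stein manifold give symplectomorphic completions), the completion built from $\phi_M$ is symplectomorphic to the one built from $\phi_N\circ\Psi$. Composing this with the symplectomorphism from the first step shows that the canonical completions of $M$ and $N$ are symplectomorphic. This is the content of the second bullet point, biholomorphic invariance.

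If completeness of the pulled-back function is an issue, I will use the standard trick of replacing $\phi$ by $h\circ\phi$ with $h$ convex and rapidly increasing. This keeps the Morse critical points and strict plurisubharmonicity and makes the Liouville field complete.

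The final step is to invoke functoriality. Symplectic homology, together with its pair-of-pants ring structure, is an invariant of the completed exact symplectic manifold up to exact symplectomorphism. The deformation argument behind the first bullet point yields exact symplectomorphisms, possibly after a compactly supported isotopy. Hence $\SH(M;\F_2)\cong\SH(N;\F_2)$ as rings.

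The main point of care is the second step, namely that the identification of completions is through exact symplectomorphisms, so that the invariance of $\SH$ applies. I will rely on McLean's statement, and the Liouville-homotopy argument in \cite{CiEli2012}, for this rather than reproving it.
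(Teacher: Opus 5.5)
Your proposal is correct and follows the paper's proof essentially step for step. You pull $\phi_N$ back along the biholomorphism, invoke McLean's uniqueness of completions on $M$, and conclude by invariance of $\SH$ under exact symplectomorphism. Your hedge about completeness is unnecessary: $\Psi$ intertwines the Liouville fields, so completeness of the pulled-back structure follows automatically from that of $\phi_N$.
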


\begin{proof}
Choose a complete finite-type exhausting strictly plurisubharmonic function $\phi_N$ on $N$.  If $\Phi:M\to N$ is biholomorphic, then $\phi_N\circ\Phi$ is a complete finite-type exhausting strictly plurisubharmonic function on $M$, and $\Phi$ identifies the associated exact symplectic structures.

McLean's uniqueness statement says that any two complete finite-type exhausting strictly plurisubharmonic functions on the same Stein manifold yield symplectomorphic Stein completions~\cite[Section~2.1]{McLean2012}.  Thus the structure pulled back from $N$ is symplectomorphic to the canonical one attached to any chosen finite-type exhaustion on $M$.  Symplectic homology is invariant under this exact symplectic equivalence.
\end{proof}

\subsection{McLean's construction.}\label{ss-McLean} In this subsection we briefly recall McLean's construction of exotic Stein manifolds using Kaliman modification. McLean considered the Brieskorn hypersurface
\[
 Z=\{r^7+u^2+v^2+s^2=0\}\subset\cplx^4. 
\]
Let $p\in Z\setminus \{0\}$. The space that McLean considered is the Kaliman modification of the triple $(\cplx^4, Z,\{p\})$, i,e., 
\[
K_4:=\Kalmod(\cplx^4, Z,\{p\}).
\]
The topology, specifically symplectic homology, plays very crucial role in our proof of Theorem~\ref{thm-main}. Some details about the topology that will used for our proof are given in Section~\ref{sec-symplectic}.

\section{The affine family $X_{m,n}$: irreducibility and smoothness}\label{sec-family}

First we show the irreducibility of $F_{m,n}$ in $\cplx[\zeta, z_0,z_1,z_2,z_3]$.
\begin{proposition}\label{prop:irreducible}
The polynomial $F_{m,n}$ is irreducible in $\cplx[\zeta,z_0,z_1,z_2,z_3]$.  Hence, $X_{m,n}$ is an irreducible affine variety of complex dimension four.
\end{proposition}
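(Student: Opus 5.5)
Irreducibility follows from the fact that $F_{m,n}$ has degree one in $z_2$. Write
\[
F_{m,n}=A\,z_2+B,\qquad A=1+z_1\zeta,\qquad B=\zeta^m z_0^n+\zeta z_3^2-1,
\]
with $A,B\in R:=\cplx[\zeta,z_0,z_1,z_3]$.

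A polynomial of degree one in $z_2$ over the UFD $R$ is irreducible in $R[z_2]$ exactly when it is primitive, i.e. when $\gcd(A,B)=1$ in $R$. Indeed, in any factorization one factor must have $z_2$-degree zero, so it is an element of $R$ dividing both $A$ and $B$.

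The polynomial $A=1+z_1\zeta$ is irreducible in $R$. It has degree one in $z_1$, and its coefficients $\zeta$ and $1$ are coprime. So it suffices to check that $A\nmid B$. Reduce $B$ modulo $A$ by working in $R/(A)$, which is isomorphic to $\cplx[\zeta,\zeta^{-1},z_0,z_3]$ via $z_1\mapsto -1/\zeta$. The polynomial $B$ does not involve $z_1$, so its image is simply $\zeta^m z_0^n+\zeta z_3^2-1$. This element is nonzero in the Laurent ring, so $A\nmid B$. Alternatively, one can compare $z_1$-degrees: $A$ has $z_1$-degree one while $B\neq0$ has $z_1$-degree zero, so $A$ cannot divide $B$. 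Hence $\gcd(A,B)=1$, and $F_{m,n}$ is irreducible in $R[z_2]=\cplx[\zeta,z_0,z_1,z_2,z_3]$.

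Since $F_{m,n}$ is irreducible and nonconstant, the ideal $(F_{m,n})$ is prime. Hence $X_{m,n}=V(F_{m,n})$ is an irreducible hypersurface in $\cplx^5$. Krull's principal ideal theorem then gives that it has pure dimension $5-1=4$.

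There is no genuine obstacle here. The only point requiring care is the primitivity argument in a polynomial ring over a UFD, which is Gauss's lemma. Smoothness is not part of this statement; presumably it will follow separately from $\partial F/\partial z_2=1+z_1\zeta$ together with the other partials.
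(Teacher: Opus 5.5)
Your proof is correct and follows the paper's argument. Like the paper, you view $F_{m,n}$ as linear in $z_2$ over $\mathbb{C}[\zeta,z_0,z_1,z_3]$, observe that $1+z_1\zeta$ is irreducible and cannot divide the nonzero $z_1$-free coefficient, and conclude that $F_{m,n}$ is primitive and hence irreducible. Your direct degree-splitting argument (one factor must lie in $R$) in fact makes the paper's appeal to Gauss's lemma unnecessary.
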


\begin{proof}
Consider the unique factorization domain
$A:=\cplx[\zeta,z_0,z_1,z_3]$ and
\[
 F_{m,n}(\zeta,z_0,z_1,z_2,z_3)
 :=(1+z_1\zeta)z_2+ (\zeta^mz_0^n+ \zeta z_3^2-1)
\]
as a polynomial of degree one in $z_2$ over 
$A$.
The coefficient $1+z_1\zeta$ of $z_2$ is irreducible in $A$.  It cannot divide the second coefficient, which is a nonzero polynomial independent of $z_1$.  Thus, the two coefficients are coprime.  Hence, the polynomial $F_{m,n}$ is primitive in $A[z_2]$ and linear, hence, irreducible over $\Frac(A)$.  Gauss's lemma gives us the irreducibility of $F_{m,n}$ in $A[z_2]$.
The quotient coordinate ring is therefore a domain, and the nonzero affine variety $X$ in $\cplx^5$ has dimension four.
\end{proof}

Next, we show that $X_{m,n}$ is a smooth complex hypersurface. 

\begin{proposition}\label{prop:smooth-general}
The affine variety $X_{m,n}$ is smooth.  In particular, it is a connected Stein manifold.
\end{proposition}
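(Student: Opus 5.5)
Smoothness will follow from the Jacobian criterion: we show that $F_{m,n}$ and its gradient have no common zero in $\cplx^5$. The partial derivatives are
\[
\partial_{z_2}F=1+z_1\zeta,\quad \partial_{z_1}F=\zeta z_2,\quad \partial_{z_3}F=2\zeta z_3,\quad \partial_{z_0}F=n\zeta^m z_0^{n-1},
\]
\[
\partial_\zeta F=m\zeta^{m-1}z_0^n+z_1z_2+z_3^2 .
\]
Suppose all of these vanish at a point of $X_{m,n}$. From $\partial_{z_2}F=0$ we get $z_1\zeta=-1$, so $\zeta\neq 0$. Then $\partial_{z_1}F=0$ gives $z_2=0$, $\partial_{z_3}F=0$ gives $z_3=0$, and $\partial_{z_0}F=0$ gives $z_0=0$ because $n\geq 2$ and $\zeta\neq0$. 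Substituting into $F=0$ yields
\[
0+0\cdot z_2+0-1=-1\neq 0,
\]
a contradiction. Hence $X_{m,n}$ is a smooth hypersurface. (We do not even need $\partial_\zeta F$.)

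For the Stein property, $X_{m,n}$ is a closed complex submanifold of $\cplx^5$, hence Stein.

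The main point is connectedness. Proposition~\ref{prop:irreducible} shows that $X_{m,n}$ is an irreducible affine variety. An irreducible complex algebraic variety is connected in the Euclidean topology, since its smooth locus is connected and dense (a standard fact). Here the smooth locus is all of $X_{m,n}$, so $X_{m,n}$ is a connected complex manifold.

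If one wants to avoid quoting that fact, an alternative is available. On the open set $\{1+z_1\zeta\neq0\}$ one can solve for $z_2$, so this set is a graph over a connected open subset of $\cplx^4$, namely the complement of a hypersurface. Its complement in $X_{m,n}$ is the analytic subset $\{1+z_1\zeta=0,\ \zeta^mz_0^n+\zeta z_3^2=1\}$. This subset has dimension $3$, so it does not disconnect the $4$-dimensional manifold, and the graph is dense. We would then conclude connectedness from the fact that a proper analytic subset does not disconnect a connected open dense piece of a manifold.
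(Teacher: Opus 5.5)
Your proof is correct and follows the paper's argument for smoothness. Both use the Jacobian criterion with $\partial_{z_2}F$, $\partial_{z_1}F$, $\partial_{z_0}F$ and $\partial_{z_3}F$, which forces $\zeta\neq0$ and $z_0=z_2=z_3=0$, hence $F=-1$; you spell out this elimination, which the paper only asserts, and you also justify the ``connected Stein'' clause that the paper states without proof (Stein as a closed submanifold of $\cplx^5$; connected via irreducibility from Proposition~\ref{prop:irreducible}, or via your dense connected graph over $\{1+z_1\zeta\neq0\}$).
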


\begin{proof}
Let $(\zeta,z_0,z_1,z_2,z_3)\in X_{m,n}$.
    This implies that 
    \[
 F_{m,n}(\zeta,z_0,z_1,z_2,z_3)
 :=(1+z_1\zeta)z_2+ (\zeta^mz_0^n+ \zeta z_3^2-1).
\]
Clearly, 
\begin{equation}\label{eq-derivative}
  \dfrac{\bdy F_{m,n}}{\bdy z_2}=1+z_1\zeta.  
\end{equation}
Hence,  $X_{m,n}$ is smooth at every point of the set $\{(\zeta,z)\in X_{m,n}: 1+z_1\zeta\neq 0\}$. We further calculate 
\begin{align*}
\dfrac{\bdy F_{m,n}}{\bdy z_1} &=\zeta z_2\\
\dfrac{\bdy F_{m,n}}{\bdy z_0} &=n\zeta^mz_0^{n-1}\\
\dfrac{\bdy F_{m,n}}{\bdy z_3} &=2\zeta z_3
\end{align*}
From \eqref{eq-derivative} along with the above three equations we obtain that 
\[
X_{m,n}\cap\{(\zeta,z)\in\cplx^5: 1+z_1\zeta=0, \zeta z_2=0, \zeta^m z_0^{n-1}=0, \zeta z_3=0\}=\varnothing.
\]
Hence, $X_{m,n}$ is a smooth hypersurface for every $m,n\in N$
\end{proof}

%  DENSITY PROPERTY

\section{The density property of $X_{m,n}$}\label{sec-density}

Fix integers $m, n\geq 2$ throughout this section and denote $X:=X_{m,n}$. Throughout this section we will use this notation.
Our main tool to show the density property of $X$ is Result~\ref{crit:leuenberger}.  For that, we need to show that there are enough complete vector fields such that 
\begin{itemize}
    \item[i)] $\aut X$ acts transitively on $X$. 
    \item[ii)] There exist compatible pair $(\nu,\mu)$ as in the definition \ref{def:compatible}.
    \item [iii)] There exists a point $p_0$ such that the set $\{\mu|_{p_0}: (\nu,\mu)\;\text{is a compatible pair}\;\;\}$ generates the tangent space $T_{p_0}X$ in the sense of defintition~\ref{def:generating}.
\end{itemize}
To show these we break the proof in a few parts
\bigskip

{\bf \em Part I. Existence of some complete holomorphic vector fields.}
\smallskip

 \noindent We first consider the following vector fields of $\cplx^5$ constructed using the defining function $F_{m,n}$.
\begin{align*}
V_1 &: = 2z_3\dfrac{\bdy}{\bdy z_1} -z_2\dfrac{\bdy}{\bdy z_3}\\
V_2 &:= (1+z_1\zeta)\dfrac{\bdy}{\bdy z_3}-2z_3\zeta \dfrac{\bdy}{\bdy z_2}\\
V_3 &:= nz_0^{n-1}\zeta^{m-1}\dfrac{\bdy}{\bdy z_1}-z_2\dfrac{\bdy}{\bdy z_0}\\
V_4 &:= (1+z_1\zeta)\dfrac{\bdy}{\bdy z_0} - nz_0^{n-1}\zeta^m\dfrac{\bdy}{\bdy z_2}\\
V_5 &:= (1+z_1\zeta)\dfrac{\bdy}{\bdy \zeta} - (mz_0^n \zeta^{m-1}+z_1z_2+z_3^2)\dfrac{\bdy}{\bdy z_2}.
\end{align*}

Clearly, we obtain that 
\[
V_j(F_{m,n})=0\;\;\forall j=1,2,3,4,5.
\]
Hence, $V_j\in \vfhol(X)$. We now show that each of the above vector fields are complete. 
For that, we first compute:
\begin{align*}
  & V_1(\zeta) =0=V_1(z_0)=V_1(z_2)  \\
  & V_1(z_1) =2z_3, V_1(z_3)=z_2\\
  \impl & V_1^3(z_1)=0=V_1^2(z_3).
\end{align*}
Similarly, we 
also obtain that 
\begin{align*}
V_2(\zeta)& =V_2(z_0)=V_2(z_1)=0=V_2^2(z_3)=V_2^3(z_2)\\
V_3(\zeta)&=V_3(z_2)=V_3(z_3)=0=V_3^2(z_2)=V_3^{n+1}\\
V_4(\zeta)&=V_4(z_1)=V_4(z_3)=0=V_4^2(z_0)=V_4^{n+1}.
\end{align*}
Therefore, 
$V_1, V_2,V_3$ and $V_4$ define locally  nilpotent derivations of the polynomial ring $\cplx[\zeta,z_0,z_1,z_2,z_3]$. Their exponentials give polynomial flows defined for every complex time. Hence, $V_j$, $j=1,2,3,4$, are $\cplx$-complete. Since $V_5$ is not nilpotent, we will show its completeness in the following proposition.

\begin{proposition}
Let \(m\geq 1\) and \(n\geq 0\) be integers. The polynomial
vector field
\[
V_5=(1+z_1\zeta)\frac{\partial}{\partial \zeta}
-\left(mz_0^n\zeta^{m-1}+z_1z_2+z_3^2\right)
 \frac{\partial}{\partial z_2}
\]
is $\cplx$-complete on $\cplx^5$, with coordinates
\((\zeta,z_0,z_1,z_2,z_3)\). 
\end{proposition}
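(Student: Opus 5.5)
The plan is to integrate the flow of $V_5$ explicitly and check that the solution through every initial point is an entire function of the complex time $t$. The key observation is that the system triangularizes. The coordinates $z_0,z_1,z_3$ are first integrals, since $V_5(z_0)=V_5(z_1)=V_5(z_3)=0$. The equation for $\zeta$ involves only $\zeta$ and the constant $z_1$. The equation for $z_2$ is linear in $z_2$, with coefficients depending only on $\zeta$ and the constants. So we solve a linear ODE for $\zeta$ first, substitute, and then solve a second linear ODE for $z_2$.

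Concretely, fix an initial point $(\zeta^0,z_0,z_1,z_2^0,z_3)\in\cplx^5$. The integral curve satisfies $z_0,z_1,z_3$ constant and
\[
\dot\zeta=1+z_1\zeta,\qquad \dot z_2=-z_1z_2-\bigl(mz_0^n\zeta^{m-1}+z_3^2\bigr).
\]
To treat $z_1=0$ and $z_1\neq0$ uniformly, I will use the entire function $E(w)=(e^w-1)/w$, with $E(0)=1$. Then
\[
\zeta(t)=e^{z_1t}\zeta^0+tE(z_1t)
\]
is an entire function of $(t,\zeta^0,z_1)$ solving the first equation with $\zeta(0)=\zeta^0$; this is verified by differentiation. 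Next set $g(s):=mz_0^n\zeta(s)^{m-1}+z_3^2$, which is entire in $s$ and in all parameters. Variation of constants gives
\[
z_2(t)=e^{-z_1t}\Bigl(z_2^0-\int_0^t e^{z_1s}g(s)\,ds\Bigr),
\]
where the integral is along the segment $[0,t]\subset\cplx$. This is again entire in $t$, and holomorphic in the initial data, because the integrand is entire in $s$.

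By uniqueness of solutions of holomorphic ODEs, these formulas give the maximal integral curve through the initial point. Since they are defined for all $t\in\cplx$, the field $V_5$ is $\cplx$-complete. The resulting map $\Phi:\cplx\times\cplx^5\to\cplx^5$ is holomorphic, and the group law $\Phi_{t+s}=\Phi_t\circ\Phi_s$ follows from uniqueness. Nothing in the argument uses $m\ge2$ or $n\ge1$; for $m=1$ the term $\zeta^{m-1}$ is simply constant. This is consistent with the hypotheses $m\ge1$, $n\ge0$ in the statement.

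I do not expect a genuine obstacle; the only delicate point is bookkeeping. One must ensure the closed-form solution does not degenerate at $z_1=0$, where the naive formula contains $1/z_1$; writing it through $E$ removes this. One must also note that the time integral defining $z_2(t)$ is path independent and entire, because the integrand is holomorphic in $s$.
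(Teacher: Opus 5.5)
Your proposal is correct and follows essentially the same route as the paper. Both arguments use that $z_0,z_1,z_3$ are first integrals, solve $\dot\zeta=1+z_1\zeta$ using the entire function $tE(z_1t)$ (the paper's $E_a(t)$), and obtain $z_2(t)$ via the integrating factor $e^{z_1t}$, noting that the integrand is entire in time. The paper additionally evaluates the integral in closed form, $\int_0^t me^{z_1s}\zeta(s)^{m-1}\,ds=tE(z_1t)\sum_{j=0}^{m-1}\zeta(t)^{m-1-j}(\zeta^0)^j$, but this is cosmetic and is not needed for completeness.
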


\begin{proof}
Along an integral curve, the coordinates \(z_0,z_1,z_3\)
remain constant. To simplify the flow equations a bit, we put
\[
a=z_1,\qquad b=z_0^n,\qquad c=z_3^2,
\]
and prescribe the initial conditions
\(\zeta(0)=\zeta_0\) and \(z_2(0)=\eta\).
The flow equations are
\[
\zeta'(t)=1+a\zeta(t),\qquad
z_2'(t)+az_2(t)=-mb\zeta(t)^{m-1}-c.
\]

Define
\[
E_a(t)=
\begin{cases}
\dfrac{e^{at}-1}{a},&a\neq 0,\\[6pt]
t,&a=0.
\end{cases}
\]
This function is jointly entire in \((a,t)\). 
Solving the first equation gives us
\[
T_t:=\zeta(t)=e^{at}\zeta_0+E_a(t).
\]
Using the integrating factor \(e^{at}\) in the second
equation gives
\[
W_t:=z_2(t)=e^{-at}
\left[
\eta-\int_0^t e^{au}
\left(mbT_u^{m-1}+c\right)\,du
\right].
\]
The integrand is entire in \(u\), so the integral is
path-independent and exists for every \(t\in\cplx\).
Integrating along \(u=vt\), \(0\leq v\leq 1\), also shows
that \(W_t\) depends holomorphically on \(t\) and all
the initial coordinates.

For a formula without apparent singularities, observe that
\[
T_t-\zeta_0=(1+a\zeta_0)E_a(t),\qquad
\frac{dT_t}{dt}=(1+a\zeta_0)e^{at}.
\]
When \(1+a\zeta_0\neq 0\), these identities imply
\[
\int_0^t me^{au}T_u^{m-1}\,du
=\frac{T_t^m-\zeta_0^m}{1+a\zeta_0}
=E_a(t)\sum_{j=0}^{m-1}T_t^{m-1-j}\zeta_0^j.
\]
Both sides are holomorphic in \((a,\zeta_0,t)\), so the
last identity extends across \(1+a\zeta_0=0\).
Consequently,
\[
W_t=e^{-at}\left[
\eta-E_a(t)\left(
c+b\sum_{j=0}^{m-1}T_t^{m-1-j}\zeta_0^j
\right)
\right].
\]
%In particular, when \(a=0\),
%\[
%T_s=t_0+s,\qquad
%W_s=\eta-b\bigl((t_0+s)^m-t_0^m\bigr)-cs.
%\]

Hence,
\[
\Phi_t(\zeta_0,z_0,z_1,\eta,z_3)
=(T_t,z_0,z_1,W_t,z_3)
\]
defines a jointly entire flow for every complex time
\(t\in\cplx\). 
Therefore,  $V_5$ is $\cplx$-complete.
\end{proof}

{\bf \em Part II. $\aut X$ acts transitively on $X$.}
\smallskip

\noindent We now show the transitivity of the group $\aut X$. 
First, we mention some commutator identities on $X$ that will be useful in our proofs.
\begin{align}
 [V_1,V_2]
 &=-2(1+z_1\zeta)\frac{\bdy}{\bdy z_1}+2z_2\zeta\frac{\bdy}{\bdy z_2},
 \label{eq:bracket-12}\\
[V_1,V_4]
 &=2z_3\zeta\frac{\bdy}{\bdy z_0}-n\zeta^mz_0^{n-1}\frac{\bdy}{\bdy z_3},
 \label{eq:bracket-14}\\
 [V_1,V_5]
 &=2z_3\zeta\frac{\bdy}{\bdy \zeta}-(mz_0^n \zeta^{m-1}+z_1z_2+z_3^2)\frac{\bdy}{\bdy z_3},
 \label{eq:bracket-15}\\
 [V_3,V_5]
 &=nz_0^{n-1}\zeta^m\frac{\bdy}{\bdy \zeta}-(mz_0^n \zeta^{m-1}+z_1z_2+z_3^2)\frac{\bdy}{\bdy z_0} \notag \\
& -n(m-1)\zeta^{m-2}z_0^{n-1}(1+z_1\zeta)\frac{\bdy}{\bdy z_1} 
 +n(m-1)\zeta^{m-1}z_0^{n-1}z_2\frac{\bdy}{\bdy z_2}.
 \label{eq:bracket-35}
\end{align}

We first show  here that the Lie algebra generated by the vector fields $V_j$, $j=1,\dots, 5$, spans $T_pX$ for every $p\in X$. Suppose a point 
$p\in X\cap \{1+z_1\zeta\neq0\}$. We treat this case in the following proposition.

\begin{proposition}\label{prop:span-q-nonzero}
At every point $p=(s, w_0,w_1,w_2.w_3)\in X$ with $1+sw_1\neq0$, the four vectors
$ V_2|_p, V_4|_p, V_5|_p, [V_1,V_2]_p$
form a basis of $T_pX$.
\end{proposition}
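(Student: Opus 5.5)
Write $q:=1+sw_1\neq0$ and use the ordered coordinates $(\zeta,z_0,z_1,z_2,z_3)$ on $\cplx^5$. Since $X$ is a smooth hypersurface (Proposition~\ref{prop:smooth-general}), $T_pX$ is four-dimensional. Each of the four fields is tangent to $X$: the $V_j$ annihilate $F_{m,n}$, and hence so does their bracket. It therefore suffices to show that the four evaluated vectors are linearly independent in $\cplx^5$. I would write them as coordinate vectors:
\begin{align*}
V_2|_p&=(0,0,0,-2w_3s,\,q),\\
V_4|_p&=(0,q,0,-nw_0^{n-1}s^m,\,0),\\
V_5|_p&=(q,0,0,-(mw_0^ns^{m-1}+w_1w_2+w_3^2),\,0),\\
[V_1,V_2]_p&=(0,0,-2q,\,2w_2s,\,0),
\end{align*}
where the last line uses \eqref{eq:bracket-12}.

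Next I would delete the $z_2$-column, i.e.\ look at the $4\times4$ minor in the columns $\zeta,z_0,z_1,z_3$. In these columns $V_5|_p$ contributes only the $\zeta$-entry $q$, $V_4|_p$ only the $z_0$-entry $q$, $[V_1,V_2]_p$ only the $z_1$-entry $-2q$, and $V_2|_p$ only the $z_3$-entry $q$. Up to reordering the rows, this minor is diagonal with determinant $\pm 2q^4\neq0$. Hence the four vectors are linearly independent, and therefore form a basis of the four-dimensional space $T_pX$.

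The only step needing care is the bracket computation \eqref{eq:bracket-12}. As a sanity check, $V_1$ has components only in $\partial_{z_1},\partial_{z_3}$ with coefficients $2z_3,-z_2$, while $V_2$ has components in $\partial_{z_3},\partial_{z_2}$ with coefficients $q(z)=1+z_1\zeta$ and $-2z_3\zeta$. Then $V_1(q)=2z_3\zeta$, $V_1(-2z_3\zeta)=2z_2\zeta$, $V_2(2z_3)=2q$, and $V_2(-z_2)=2z_3\zeta$. The $\partial_{z_3}$-terms cancel, and what remains is $-2q\,\partial_{z_1}+2z_2\zeta\,\partial_{z_2}$, which agrees with \eqref{eq:bracket-12}. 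Note that no relation from the defining equation is needed, since the independence comes purely from the diagonal minor.
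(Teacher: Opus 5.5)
Your proof is correct and is essentially the paper's argument: you project onto the $(\zeta,z_0,z_1,z_3)$-coordinates, where the four vectors become (up to order) $q\,\partial_\zeta$, $q\,\partial_{z_0}$, $-2q\,\partial_{z_1}$, $q\,\partial_{z_3}$, and these are independent because $q\neq0$. Your signs for $V_5|_p$ and $[V_1,V_2]_p$ are the correct ones; the paper's displayed projections carry harmless sign slips. Your explicit remark that the bracket annihilates $F_{m,n}$, so $[V_1,V_2]_p$ lies in $T_pX$, fills in a point the paper leaves implicit.
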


\begin{proof}
Project the four vectors onto the coordinate directions
\[
 \frac{\bdy}{\bdy \zeta},\quad
 \frac{\bdy}{\bdy z_0},\quad
 \frac{\bdy}{\bdy z_3},\quad
 \frac{\bdy}{\bdy z_1}.
\]
Their projected components are
\[
 (1+sw_1)\frac{\bdy}{\bdy z_3}|_p,
 \qquad
 (1+sw_1)\frac{\bdy}{\bdy z_0}|_p,
 \qquad
-(1+sw_1)\frac{\bdy}{\bdy \zeta}|_p,
 \qquad
 2(1+sw_1)\frac{\bdy}{\bdy z_1}|_p
\]
respectively.
%with only additional $\dfrac{\bdy}{\bdy z_2}$ components.  
Since $1+sw_1\neq0$, the projected vectors are linearly independent.  As $T_pX$ has dimension four, they form a basis.
\end{proof}

In the next case, suppose the point $p$ lies in 
the hypersurface $\{1+z_1\zeta=0\}$.
Suppose now that $p=(\zeta, z_0,z_1,z_2,z_3)\in X$ and $1+\zeta z_1=0$.  Then, $\zeta\neq0$, and the defining equation reduces to
\begin{equation}\label{eq:zero-equation}
\zeta^mz_0^n+ \zeta z_3^2-1=0
\end{equation}
Hence, $z_0$ and $z_3$ cannot both vanish.  It follows that
\begin{equation}\label{eq:a-b-not-both-zero}
 (nz_0^{n-1}\zeta^m, 2z_3\zeta)\neq(0,0).
\end{equation}
Indeed, since $\zeta\neq 0$, $2z_3\zeta=0$ implies that $z_3=0$, hence, \eqref{eq:zero-equation} gives $z_0\neq0$.
%and $nz_0^{n-1}t^m\neq0$.
At such a point,
\[
 V_4|_p=-nz_0^{n-1}\zeta^m\frac{\bdy}{\bdy z_2},
 \qquad
 V_2|_p=-2z_3\zeta\frac{\bdy}{\bdy z_2}.
\]
Thus, by~\eqref{eq:a-b-not-both-zero}, the Lie algebra generated by $V_j, j=1,\dots, 5$ contain the $\dfrac{\bdy}{\bdy z_2}|_p$.

\begin{proposition}\label{prop:span-qzero-y-nonzero}
Let $p=(\zeta,z_0,z_1,z_2,z_3)\in X$ satisfy $1+z_1\zeta=0$ and $z_2\neq0$.  Then the Lie algebra generated by $V_j|_p$, $j=1,\dots, 5$, spans $T_pX$.
\end{proposition}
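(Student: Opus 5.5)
The plan is to exploit the special shape of $T_pX$ at points where $1+z_1\zeta=0$: there the $\partial/\partial z_1$-coefficient of $dF_{m,n}$ is $\zeta z_2\neq0$. So the $dz_1$-component of a tangent vector is determined by its other four components. Concretely, since $1+z_1\zeta=0$ at $p$,
\[
 dF_{m,n}|_p=\zeta z_2\,dz_1+n\zeta^mz_0^{n-1}\,dz_0+2\zeta z_3\,dz_3+\frac{\bdy F_{m,n}}{\bdy\zeta}(p)\,d\zeta ,
\]
with no $dz_2$ term. Because $\zeta\neq0$ (forced by $1+z_1\zeta=0$) and $z_2\neq0$, the projection $T_pX\to\cplx^4$ onto the coordinates $(\zeta,z_0,z_3,z_2)$ is an isomorphism. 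Hence it suffices to exhibit elements of the Lie algebra generated by $V_1,\dots,V_5$ whose values at $p$ have linearly independent projections onto the directions $\frac{\bdy}{\bdy\zeta},\frac{\bdy}{\bdy z_0},\frac{\bdy}{\bdy z_3},\frac{\bdy}{\bdy z_2}$. All fields involved are tangent to $X$, being brackets of fields annihilating $F_{m,n}$.

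Next I will evaluate the available fields at $p$, using $1+z_1\zeta=0$ throughout.
\begin{itemize}
\item The discussion preceding the proposition, via \eqref{eq:a-b-not-both-zero}, already puts $\frac{\bdy}{\bdy z_2}|_p$ in the span, as a combination of $V_2|_p$ and $V_4|_p$.
\item $V_1|_p=2z_3\frac{\bdy}{\bdy z_1}-z_2\frac{\bdy}{\bdy z_3}$ projects to $-z_2\frac{\bdy}{\bdy z_3}$, which is nonzero since $z_2\neq0$.
\item $V_3|_p=nz_0^{n-1}\zeta^{m-1}\frac{\bdy}{\bdy z_1}-z_2\frac{\bdy}{\bdy z_0}$ projects to $-z_2\frac{\bdy}{\bdy z_0}$.
\end{itemize}
This covers the $z_2$, $z_3$ and $z_0$ directions.

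For the $\zeta$ direction I will use the brackets \eqref{eq:bracket-15} and \eqref{eq:bracket-35}. By \eqref{eq:bracket-15}, $[V_1,V_5]_p$ has $\frac{\bdy}{\bdy\zeta}$-coefficient $2z_3\zeta$, plus only a $\frac{\bdy}{\bdy z_3}$-term. By \eqref{eq:bracket-35}, $[V_3,V_5]_p$ has $\frac{\bdy}{\bdy\zeta}$-coefficient $nz_0^{n-1}\zeta^m$. Its $\frac{\bdy}{\bdy z_1}$-term vanishes at $p$ because it carries the factor $1+z_1\zeta$, and its remaining terms lie in the $z_0$ and $z_2$ directions. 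By \eqref{eq:a-b-not-both-zero}, at least one of these two $\frac{\bdy}{\bdy\zeta}$-coefficients is nonzero. Subtracting suitable multiples of the vectors already found kills the $z_0,z_2,z_3$ parts, leaving a vector whose projection is a nonzero multiple of $\frac{\bdy}{\bdy\zeta}$.

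Thus the four projections are triangular with nonzero diagonal, so the corresponding tangent vectors form a basis of $T_pX$. The only real obstacle is making sure the $\zeta$ direction is reached, and this is handled by \eqref{eq:a-b-not-both-zero} together with the vanishing of the $(1+z_1\zeta)$-terms at $p$. The rest is bookkeeping of the bracket formulas already recorded.
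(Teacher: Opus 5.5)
Your proposal is correct and follows the paper's argument: put $\partial/\partial z_2|_p$ in the span using $V_2|_p$ and $V_4|_p$, take $V_1|_p$ and $V_3|_p$ for the $z_3$ and $z_0$ directions, and use $[V_1,V_5]_p$ or $[V_3,V_5]_p$ for the $\zeta$ direction. The paper splits into the cases $z_3\neq0$ and $z_3=0$, whereas you fold this into the single observation from \eqref{eq:a-b-not-both-zero} that one of the two $\partial/\partial\zeta$-coefficients is nonzero; you also make explicit, via $\partial F_{m,n}/\partial z_1=\zeta z_2\neq0$, that the projection to the $(\zeta,z_0,z_3,z_2)$-coordinates is an isomorphism, which the paper leaves implicit.
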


\begin{proof}
The argument before this proposition gives us that the vector $\dfrac{\bdy}{\bdy z_2}|_p$ is contained in the Lie algebra generated by $V_j|_p$, $j=1,\dots, 5$.

Assume first that $z_3\neq0$.  
%Modulo the $\dfrac{\bdy}{\bdy z_2}$-direction, 
Consider
\[
 V_1|_p,\qquad V_3|_p,\qquad [V_1,V_5]|_p.
\]
Their projections onto the subspace spanned by $\frac{\bdy}{\bdy \zeta},
 \frac{\bdy}{\bdy z_0}$, and
 $\frac{\bdy}{\bdy z_3}$, after identifying with
$(t,z_0,z_3)$-coordinates are
\[
 (0,0,-z_2),
 \qquad
 (0,-z_2,0),
 \qquad
 (-2z_3\zeta,0,mz_0^n \zeta^{m-1}+z_1z_2+z_3^2).
\]
These vectors are linearly independent.  Hence, $V_1|_p,V_3|_p, [V_1,V_5]|_p$, together with $\dfrac{\bdy}{\bdy z_2}|_p$, span the four-dimensional tangent space $T_pX$.

Assume next that $z_3=0$.  Then $z_0^{n-1}\zeta^m\neq0$.  
%Modulo $\dfrac{\bdy}{\bdy z_2}$, 
Here we use
\[
 V_1|_p,\qquad V_3|_p,\qquad [V_3,V_5]|_p.
\]
As before, their projections to $(t,z_0,z_3)$ are
\[
 (0,0,-z_2),
 \qquad
 (0,-z_2,0),
 \qquad
 (nz_0^{n-1}\zeta^m, -(mz_0^n \zeta^{m-1}+z_1z_2+z_3^2),0).
\]
These vectors are again linearly independent.  Hence, the Lie algebra generated by $V_j|_p$, $j=1,\dots, 5$, spans $T_pX$.
\end{proof}

\begin{proposition}\label{prop:span-qzero-yzero}
Let $p=(\zeta,z_0,z_1,0,z_3)\in X$ satisfying $1+z_1\zeta=0$. Then, the Lie algebra generated by $V_j$, $j=1,\dots, 5$, spans $T_pX$.
\end{proposition}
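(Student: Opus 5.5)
At such a point we have $1+z_1\zeta=0$ and $z_2=0$, so $\zeta\neq 0$ and \eqref{eq:zero-equation} holds. The plan is to describe $T_pX$ explicitly and then fill it with evaluations of the $V_j$ and of the brackets \eqref{eq:bracket-14}, \eqref{eq:bracket-15}, \eqref{eq:bracket-35}, using Lemma~\ref{lem:koszul} for the hardest part.

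\textbf{Step 1: the tangent space.} Evaluate $dF_{m,n}$ at $p$. The coefficient of $dz_1$ is $\zeta z_2=0$ and the coefficient of $dz_2$ is $1+z_1\zeta=0$. Hence
\[
T_pX=\Span\Big\{\tfrac{\bdy}{\bdy z_1},\tfrac{\bdy}{\bdy z_2}\Big\}\oplus\{(\xi,\eta,\theta):A\xi+B\eta+C\theta=0\},
\]
where the second summand lives in the $(\zeta,z_0,z_3)$-coordinates and
\[
A=m\zeta^{m-1}z_0^n+z_3^2,\qquad B=n\zeta^mz_0^{n-1},\qquad C=2\zeta z_3 .
\]
Here $A$ is the value of $m\zeta^{m-1}z_0^n+z_1z_2+z_3^2$ at $p$, since $z_2=0$. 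By \eqref{eq:a-b-not-both-zero} we have $(B,C)\neq(0,0)$. So $(A,B,C)\neq 0$ and the second summand is two-dimensional.

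\textbf{Step 2: the directions $\bdy/\bdy z_2$ and $\bdy/\bdy z_1$.} The discussion preceding Proposition~\ref{prop:span-qzero-y-nonzero} already puts $\frac{\bdy}{\bdy z_2}\big|_p$ in the span, and it did not use $z_2\neq0$. For $\bdy/\bdy z_1$, note that at $p$
\[
V_1|_p=2z_3\tfrac{\bdy}{\bdy z_1},\qquad V_3|_p=nz_0^{n-1}\zeta^{m-1}\tfrac{\bdy}{\bdy z_1},
\]
because $z_2=0$. These two coefficients are $C/\zeta$ and $B/\zeta$, and they cannot both vanish. Hence $\frac{\bdy}{\bdy z_1}\big|_p$ lies in the span as well.

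\textbf{Step 3: the two-dimensional kernel.} Evaluate the three brackets at $p$, using $1+z_1\zeta=0$ and $z_2=0$.
\begin{itemize}
\item From \eqref{eq:bracket-15}, $[V_1,V_5]|_p=C\frac{\bdy}{\bdy\zeta}-A\frac{\bdy}{\bdy z_3}$, which in $(\zeta,z_0,z_3)$-coordinates is $-u_1$.
\item From \eqref{eq:bracket-35}, the $\frac{\bdy}{\bdy z_1}$ and $\frac{\bdy}{\bdy z_2}$ terms vanish at $p$. So $[V_3,V_5]|_p=B\frac{\bdy}{\bdy\zeta}-A\frac{\bdy}{\bdy z_0}$, which is $-u_2$.
\item From \eqref{eq:bracket-14}, $[V_1,V_4]|_p=C\frac{\bdy}{\bdy z_0}-B\frac{\bdy}{\bdy z_3}$, which is $-u_3$.
\end{itemize}
Here $u_1,u_2,u_3$ are the vectors of Lemma~\ref{lem:koszul} for $(a,b,c)=(A,B,C)$. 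That lemma says they span the kernel of $(\xi,\eta,\theta)\mapsto A\xi+B\eta+C\theta$.

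\textbf{Step 4: conclusion.} Steps 2 and 3 together give four independent vectors. They lie in the Lie algebra generated by $V_1,\dots,V_5$ and span the four-dimensional space $T_pX$.

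The only delicate point is Step 3. One must check carefully that no stray $\frac{\bdy}{\bdy z_1}$ or $\frac{\bdy}{\bdy z_2}$ components survive in the brackets at $p$. Even if some did, they would be harmless, since those directions are already in the span by Step 2. One must also confirm that the coefficient $A$ obtained from each bracket matches the $\frac{\bdy}{\bdy\zeta}$-coefficient of $dF$ at $p$.
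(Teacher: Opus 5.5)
Your proposal is correct and matches the paper's proof step for step. The paper also obtains $\partial/\partial z_2$ from $V_2,V_4$ and $\partial/\partial z_1$ from $V_1,V_3$, and it fills the $(\zeta,z_0,z_3)$-kernel of $dF$ with the brackets \eqref{eq:bracket-15}, \eqref{eq:bracket-35}, \eqref{eq:bracket-14} via Lemma~\ref{lem:koszul}. Your evaluation of these brackets at $p$ uses $B=n\zeta^m z_0^{n-1}$, which matches the $dz_0$-coefficient of $dF$, and is cleaner than the paper's displayed projections, which write $\zeta^{m-1}$ in place of $\zeta^m$.
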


\begin{proof}
By the preceding argument, $\dfrac{\bdy}{\bdy z_2}|_p$ lies in the Lie algebra generated by $V_j$, $j=1,\dots, 5$.  Consider,
\[
 V_1|_p=2z_3\dfrac{\bdy}{\bdy z_1}|_p,
 \qquad
 V_3|_p=n\zeta^{m-1}z_0^{n-1}\dfrac{\bdy}{\bdy z_1}|_p.
\]
Thanks to \eqref{eq:a-b-not-both-zero}, at least one of $z_0$ and $z_3$ is nonzero.  If $z_3\neq0$, then $V_1|_p$ gives us that $\bdy_{z_1}|_p$ lies in the Lie algebra generated by $V_j$, $j=1,\dots, 5$.  If $z_3=0$, then $z_0\neq0$ and $n \zeta^{m-1}z_0^{n-1}\neq0$. Then, $V_3|_p$ supplies $\bdy_{z_1}|_p$ in the Lie algebra generated by $V_j$, $j=1,\dots, 5$.
%It spans the two-dimensional tangent subspace in the $(t,z_0,z_3)$-directions.  
Since $1+z_1\zeta=z_2=0$, the differential of the defining equation restricted to these directions is
\begin{equation}\label{eq:tangent-row}
 (mz_0^n \zeta^{m-1}+z_1z_2+z_3^2)\,\alpha+n\zeta^mz_0^{n-1}\,\beta+2z_3\zeta\,\gamma=0.
\end{equation}
The projections of the three brackets~\eqref{eq:bracket-15}, \eqref{eq:bracket-35}, and~\eqref{eq:bracket-14} onto $(\zeta,z_0,z_3)$ are respectively
$(-2z_3\zeta,0,mz_0^n \zeta^{m-1}+z_1z_2+z_3^2)$,
 $(-n\zeta^{m-1}z_0^{n-1}, mz_0^n \zeta^{m-1}+z_1z_2+z_3^2,0)$ and 
$(0,-2z_3\zeta,n\zeta^{m-1}z_0^{n-1}).$
By Lemma~\ref{lem:koszul}, these vectors span the kernel of~\eqref{eq:tangent-row}.  Together with the $\dfrac{\bdy}{\bdy z_1}$ and $\dfrac{\bdy}{\bdy z_2}$ directions, they span $T_pX$.
\end{proof}

\begin{theorem}\label{thm:transitivity}
The subgroup of $\aut X$ generated by the complete flows of $V_j$, $j=1,\dots, 5$, acts transitively on $X$.
\end{theorem}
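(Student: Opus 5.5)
The plan is to apply Lemma~\ref{lem:orbit} with $M=X$ and $\mathscr V=\{V_1,\dots,V_5\}$. Three inputs are needed.

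\emph{Completeness.} Each $V_j$ is tangent to $X$, since $V_j(F_{m,n})=0$. Each is $\cplx$-complete on $\cplx^5$: $V_1,\dots,V_4$ because they are locally nilpotent, and $V_5$ by the preceding proposition. A flow on $\cplx^5$ of a field tangent to the invariant hypersurface $X$ preserves $X$: by uniqueness of integral curves, a trajectory starting on $X$ coincides with the integral curve of the restricted field and so stays in $X$. Hence the restrictions $V_j|_X$ are complete holomorphic vector fields on $X$, and their time-$t$ maps lie in $\aut X$.

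\emph{Connectedness.} $X$ is connected, as noted in Proposition~\ref{prop:smooth-general}. It is smooth, and irreducible by Proposition~\ref{prop:irreducible}, and a smooth irreducible complex variety is connected.

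\emph{Full rank everywhere.} The evaluations of the Lie algebra generated by $V_1,\dots,V_5$ must span $T_pX$ at every $p\in X$. The points of $X$ split into three cases, which are handled by earlier results:
\begin{itemize}
\item points with $1+z_1\zeta\neq0$: Proposition~\ref{prop:span-q-nonzero};
\item points with $1+z_1\zeta=0$ and $z_2\neq0$: Proposition~\ref{prop:span-qzero-y-nonzero};
\item points with $1+z_1\zeta=0$ and $z_2=0$: Proposition~\ref{prop:span-qzero-yzero}.
\end{itemize}
The brackets used there are computed in $\cplx^5$. They are still tangent to $X$, because brackets of fields annihilating $F_{m,n}$ again annihilate $F_{m,n}$. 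Restriction to $X$ commutes with brackets, so these brackets belong to the Lie algebra generated by the $V_j|_X$.

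Lemma~\ref{lem:orbit} then shows that the group $G$ generated by the flows has open orbits. Since the orbits partition $X$ into disjoint open sets and $X$ is connected, there is exactly one orbit, so $G\subset\aut X$ acts transitively.

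\emph{Main obstacle.} The step needing the most care is the justification of Lemma~\ref{lem:orbit}: one must know that the tangent space of a $G$-orbit contains all iterated brackets, not only the $V_j$ themselves. The planned argument is the standard one. If $\phi^1_{t}$ and $\phi^2_{s}$ are flows in $G$, then the curve
\[
t\mapsto \phi^1_{-\sqrt t}\circ\phi^2_{-\sqrt t}\circ\phi^1_{\sqrt t}\circ\phi^2_{\sqrt t}(p)
\]
lies in the orbit and has velocity $[V_1,V_2](p)$ at $t=0$, up to sign. This extends to iterated brackets, or equivalently one can invoke the orbit theorem of Sussmann and Nagano. The consequence is that the orbit map $G\to X$, $g\mapsto g(p)$, is a submersion at the identity, and therefore $G\cdot p$ contains a neighborhood of $p$.
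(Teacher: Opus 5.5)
Your proposal is correct and follows the paper's proof. You obtain full tangent rank from Propositions~\ref{prop:span-q-nonzero}, \ref{prop:span-qzero-y-nonzero} and~\ref{prop:span-qzero-yzero}, then combine it with connectedness of $X$ and Lemma~\ref{lem:orbit}. The extra details you supply are correct points the paper leaves implicit: the flows preserve $X$, brackets restrict to $X$, and $X$ is connected because it is smooth and irreducible. Your commutator-curve sketch only gives a one-sided velocity, and ``submersion at the identity'' is loose because $G$ is not a Lie group, but your appeal to the Sussmann--Nagano orbit theorem (or simply citing Lemma~\ref{lem:orbit}) covers this.
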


\begin{proof}
Proposition~\ref{prop:span-q-nonzero} gives full tangent rank on $\{1+z_1\zeta\neq0\}$.  Propositions~\ref{prop:span-qzero-y-nonzero} and~\ref{prop:span-qzero-yzero} give full rank on $\{1+z_1\zeta=0\}$.  Therefore, the Lie algebra generated by the five complete vector fields spans every tangent space.  Since $X$ is connected, by Proposition~\ref{prop:smooth-general} and Lemma~\ref{lem:orbit} we obtain that the subgroup of $\aut X$ generated by the complete flows of $V_j$, $j=1,\dots, 5$, acts transitively on $X$.
\end{proof}
\bigskip

{\bf \em Part III. Existence of a compatible pair.}
\smallskip

\noindent We now find a pair of vector fields that are compatible pairs in the sense of Definition~\ref{def:compatible}.
\begin{proposition}\label{prop:compatible-AB}
The pair $(V_1,V_4)$ is compatible.  Its semicompatibility ideal can be taken to be all of $\holo(X_{m,n})$.
\end{proposition}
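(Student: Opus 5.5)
The plan is to verify the two conditions of Definition~\ref{def:compatible} directly, using explicit polynomial functions in the kernels of $V_1$ and $V_4$ restricted to $X=X_{m,n}$. Since $V_1$ and $V_4$ are tangent to $X$, any polynomial on $\cplx^5$ annihilated by one of these fields restricts to a holomorphic function on $X$ annihilated by the corresponding field on $X$.

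\textbf{Step 1: the kernels.} From the coefficients of $V_1=2z_3\frac{\bdy}{\bdy z_1}-z_2\frac{\bdy}{\bdy z_3}$ we read off $V_1(\zeta)=V_1(z_0)=V_1(z_2)=0$. Since $V_1$ is a derivation, $\cplx[\zeta,z_0,z_2]\subset\Ker V_1$. Similarly, $V_4=(1+z_1\zeta)\frac{\bdy}{\bdy z_0}-nz_0^{n-1}\zeta^m\frac{\bdy}{\bdy z_2}$ kills $\zeta,z_1,z_3$, so $\cplx[\zeta,z_1,z_3]\subset\Ker V_4$.

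\textbf{Step 2: semicompatibility.} Every monomial $\zeta^{c}z_0^{a}z_1^{d}z_2^{b}z_3^{e}$ factors as $(\zeta^{c}z_0^{a}z_2^{b})\cdot(z_1^{d}z_3^{e})$. The first factor lies in $\Ker V_1$ and the second in $\Ker V_4$. Hence $\Span\{(\Ker V_1)(\Ker V_4)\}$ contains the restriction of every polynomial, that is, all of $\cplx[X]$. By Lemma~\ref{lem:polynomial-dense}, $\cplx[X]$ is dense in $\holo(X)$. So the closure of the span is all of $\holo(X)$, which is a nonzero ideal. This proves semicompatibility with $I=\holo(X)$.

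\textbf{Step 3: the compatibility function.} Take $h=z_3|_X$. Then $h\in\Ker V_4$ by Step 1, and $V_1(h)=-z_2$, which lies in $\Ker V_1$ again by Step 1. It remains only to check that $z_2$ does not vanish identically on $X$. This holds because, for example, the point $(\zeta,z_0,z_1,z_2,z_3)=(0,0,0,1,0)$ lies on $X$, since $F_{m,n}$ vanishes there.

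There is no genuine obstacle here. The only point needing a little care is choosing $h$ so that $V_1(h)$ lands back in $\Ker V_1$. The alternative $h=z_1$ fails, since $V_1(z_1)=2z_3\notin\Ker V_1$, whereas $z_3$ works because $V_1^2(z_3)=0$.
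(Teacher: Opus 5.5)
Your proposal is correct and follows the paper's proof essentially verbatim. You use the same polynomial subalgebras $\cplx[\zeta,z_0,z_2]\subset\Ker V_1$ and $\cplx[\zeta,z_1,z_3]\subset\Ker V_4$, the same monomial factorization together with Lemma~\ref{lem:polynomial-dense} to get semicompatibility with $I=\holo(X)$, and the same compatibility function $h=z_3$. Your explicit check that $z_2\not\equiv 0$ on $X$, via the point $(0,0,0,1,0)$, is a small detail the paper leaves implicit.
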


\begin{proof}
The kernels contain the polynomial subalgebras
\begin{equation}\label{eq:kernels}
 \cplx[t,z_0,z_2]\subset\Ker V_1,
 \qquad
 \cplx[t,z_1,z_3]\subset\Ker V_4.
\end{equation}
Every coordinate monomial factor can be written as
$\zeta^a z_0^b z_1^c z_2^d z_3^e$,
where $\zeta^az_0^bz_2^d\in\ker V_1$ and $z_1^c z_3^e\in\ker V_4 $
Hence, the linear span of $(\Ker V_1)(\Ker V_4)$ contains the full coordinate ring $\cplx[X]$.  By Lemma~\ref{lem:polynomial-dense}, its closure is all of $\holo(X)$.  Thus, $(V_1,V_4)$ is semicompatible with associated ideal $I=\holo(X)$. We now show that the pair is compatible. For that,
consider the holomorphic function $h(\zeta,z)=z_3$. We also know that
\[
 V_4(h)=V_4(z_3)=0,
 \qquad
 V_1(h)=V_1(z_3)=-z_2,
 \qquad
 V_1(z_2)=0.
\]
 Therefore,
\[
 h\in\Ker V_4,
 \qquad
 0\neq V_1(h)=-z_2\in\Ker V_1,
\]
and the pair is compatible.
\end{proof}

{\bf \em Part IV. There exists $p\in X$ such that $\{V_4|_p\}$ generates $T_pX$.}
\smallskip

\noindent Set
\begin{equation}\label{eq:p}
 p_0:=\left(1,0,0,\frac34,\frac12\right)\in\cplx^5.
\end{equation}
Clearly, $p_0\in X_{m,n}$.  
The tangent vectors corresponding to the complete fields are
\begin{align*}
V_1|_{p_0}&=\dfrac{\bdy}{\bdy z_1}|_{p_0}-\dfrac{3}{4}\dfrac{\bdy}{\bdy z_3}|_{p_0}\\
V_2|_{p_0}&= \dfrac{\bdy}{\bdy z_3}|_{p_0}-\dfrac{\bdy}{\bdy z_2}|_{p_0}\\
V_3|_{p_0} &= - \dfrac{3}{4}\dfrac{\bdy}{\bdy z_0}|_{p_0}\\
V_4|_{p_0}&= \dfrac{\bdy}{\bdy z_0}|_{p_0}\\
V_5|_{p_0} &= \dfrac{\bdy}{\bdy t}|_{p_0}-\dfrac{1}{4}\dfrac{\bdy}{\bdy z_2}|_{p_0}. \label{eq:p-E}
\end{align*}

\begin{lemma}\label{lem:basis-p0}
The vectors $V_1|_{p_0},V_2|_{p_0}, V_4|_{p_0},V_5|_{p_0}$ form a basis of $T_{p_0}X$.
\end{lemma}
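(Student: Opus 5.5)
The plan is to argue directly in coordinates at $p_0$. First I will record that the four vectors lie in $T_{p_0}X$. This is automatic, since each $V_j$ satisfies $V_j(F_{m,n})=0$ and is therefore tangent to $X$. As a sanity check I will also verify it against the differential of $F_{m,n}$ at $p_0=(1,0,0,\tfrac34,\tfrac12)$. Using $n\geq 2$ and $z_0=0$, the gradient in the order $(\zeta,z_0,z_1,z_2,z_3)$ is
\[
\Bigl(m\zeta^{m-1}z_0^n+z_1z_2+z_3^2,\; n\zeta^mz_0^{n-1},\; \zeta z_2,\; 1+z_1\zeta,\; 2\zeta z_3\Bigr)\Big|_{p_0}=\Bigl(\tfrac14,0,\tfrac34,1,1\Bigr).
\]
Each listed vector pairs to zero with this covector. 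For example, $V_1|_{p_0}=(0,0,1,0,-\tfrac34)$ gives $\tfrac34-\tfrac34=0$.

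Next, I will write the four vectors in the coordinates $(\zeta,z_0,z_1,z_2,z_3)$ as
\[
V_1|_{p_0}=(0,0,1,0,-\tfrac34),\quad V_2|_{p_0}=(0,0,0,-1,1),\quad V_4|_{p_0}=(0,1,0,0,0),\quad V_5|_{p_0}=(1,0,0,-\tfrac14,0).
\]
I will then project them onto the $(\zeta,z_0,z_1,z_2)$-coordinates, discarding the $z_3$ component. The projections are $(0,0,1,0)$, $(0,0,0,-1)$, $(0,1,0,0)$ and $(1,0,0,-\tfrac14)$. After reordering to $V_5,V_4,V_1,V_2$, these form a triangular matrix with nonzero diagonal entries $1,1,1,-1$, so its determinant is nonzero. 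Hence the projections are linearly independent, and therefore so are the original four vectors.

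Finally, $X$ is a smooth hypersurface of dimension four by Propositions~\ref{prop:irreducible} and~\ref{prop:smooth-general}, so $\dim T_{p_0}X=4$. Four linearly independent vectors in $T_{p_0}X$ then form a basis.

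There is no real obstacle here. The only point needing care is the correct evaluation of each field at $p_0$; in particular, $V_4|_{p_0}$ and $V_3|_{p_0}$ have vanishing $\partial/\partial z_2$ components because $z_0=0$ and $n\geq 2$.
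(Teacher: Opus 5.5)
Your proof is correct and is essentially the paper's argument. The paper projects onto the $(\zeta,z_0,z_1,z_3)$-coordinates and uses $\partial F_{m,n}/\partial z_2(p_0)=1$ to see that this projection is an isomorphism on $T_{p_0}X$, whereas you drop $z_3$ and conclude from linear independence plus $\dim T_{p_0}X=4$, which works equally well; your evaluations of the four fields and of $dF_{m,n}$ at $p_0$ are all right, and the small slip in your closing aside is harmless ($V_3$ has no $\partial/\partial z_2$ component at all; it is its $\partial/\partial z_1$ component that vanishes at $p_0$), since $V_3$ plays no role here.
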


\begin{proof}
Since 
\[
\dfrac{\bdy F_{m,n}}{\bdy z_2}|_{p_0}=1,
\] the projection
\[
 T_{p_0}X\longrightarrow\cplx^4_{\zeta,z_0,z_1,z_3}
\]
is an isomorphism.  Under this projection, the four vectors have coordinate columns
$\begin{pmatrix}
0\\
0\\
1\\
-3/4
\end{pmatrix}, 
\begin{pmatrix}
    0\\
    0\\
    0\\
    1
\end{pmatrix},
\begin{pmatrix}
    0\\
    1\\
    0\\
    0
\end{pmatrix}
$
and 
$\begin{pmatrix}
    1\\
    0\\
    0\\
    0
\end{pmatrix}$
in the order $(\zeta,z_0,z_1,z_3)$.
These are clearly a basis of $\cplx^4$.
\end{proof}

\begin{proposition}\label{prop:V4-generating}
The singleton $\{V_4|_{p_0}\}$ is a generating set of $T_{p_0}X$ in the sense of Definition~\ref{def:generating}.
\end{proposition}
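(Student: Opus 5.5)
Since $X$ has dimension four, the claim reduces to exhibiting automorphisms $\Phi\in\aut(X)_{p_0}$ whose differentials carry $V_4|_{p_0}=\frac{\bdy}{\bdy z_0}|_{p_0}$ into three further vectors that, together with it, form a basis of $T_{p_0}X$. By Lemma~\ref{lem:basis-p0} it suffices to reach vectors whose projections to $\cplx^4_{\zeta,z_0,z_1,z_3}$ have nonzero components in the $\zeta$, $z_1$ and $z_3$ directions, independent modulo $\frac{\bdy}{\bdy z_0}$. The automorphisms will be replica flows $\Phi^{fV}_t$ with $V$ among $V_1,\dots,V_5$ and $f\in\Ker V$ vanishing at $p_0$. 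By Lemma~\ref{lem-replica} these are complete, they fix $p_0$, and Lemma~\ref{lem:replica-derivative} gives their differential explicitly:
\[
d\Phi_t^{fV}\big|_{p_0}(v)=v+t\,df_{p_0}(v)\,V|_{p_0}.
\]

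The steps are as follows.
\begin{enumerate}
\item Take $V=V_5$, which kills $z_0,z_1,z_3$, and $f=z_0$, which vanishes at $p_0$. Since $df_{p_0}(V_4|_{p_0})=1$, applying the flow at $t=1$ to $V_4|_{p_0}$ gives $V_4|_{p_0}+V_5|_{p_0}$. Subtracting $V_4|_{p_0}$ within the span yields $V_5|_{p_0}$, which carries the $\zeta$-direction.
\item Take $V=V_1$, which kills $\zeta,z_0,z_2$, with $f=z_0$. This sends $V_4|_{p_0}$ to $V_4|_{p_0}+V_1|_{p_0}$, so $V_1|_{p_0}$ lies in the span and carries the $z_1$-direction.
\item Take $V=V_2$, which kills $\zeta,z_0,z_1$, with $f=z_0$ (or $f=z_1$). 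Applying the resulting automorphism to $V_4|_{p_0}$ produces $V_2|_{p_0}$, which carries the $z_3$-direction.
\end{enumerate}
The resulting vectors $V_4|_{p_0}$, $V_4|_{p_0}+V_5|_{p_0}$, $V_4|_{p_0}+V_1|_{p_0}$ and $V_4|_{p_0}+V_2|_{p_0}$ are all images of $V_4|_{p_0}$ under elements of $\aut(X)_{p_0}$. Their span equals that of $V_1|_{p_0},V_2|_{p_0},V_4|_{p_0},V_5|_{p_0}$, which is a basis by Lemma~\ref{lem:basis-p0}. The orbit of $\{V_4|_{p_0}\}$ therefore contains a basis, as Definition~\ref{def:generating} requires.

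The main point to check is that each multiplier really lies in the kernel of its field and vanishes at $p_0$. For $z_0$ this holds because $V_1,V_2,V_5$ have no $\frac{\bdy}{\bdy z_0}$ component. Also $z_0(p_0)=0$ by \eqref{eq:p}, and this is exactly why $p_0$ was chosen with vanishing $z_0$-coordinate.

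One further subtlety is that Lemma~\ref{lem:replica-derivative} uses $V(f)=0$ to get $N^2=0$. This identity holds here, and the vector fields are tangent to $X$, so the flows restrict to automorphisms of $X$ fixing $p_0$.
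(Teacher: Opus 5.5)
Your proof is correct and is essentially the paper's argument. The replica fields $z_0V_1$, $z_0V_2$, $z_0V_5$ vanish at $p_0$, and since $V_4(z_0)(p_0)=1$, Lemma~\ref{lem:replica-derivative} sends $V_4|_{p_0}$ to $V_4|_{p_0}+tV_j|_{p_0}$; together with $V_4|_{p_0}$ these give a basis by Lemma~\ref{lem:basis-p0}. One small slip: the alternative ``$f=z_1$'' in step~3 does not work, because $V_4(z_1)=0$ gives $d(z_1)_{p_0}(V_4|_{p_0})=0$, so that flow fixes $V_4|_{p_0}$ and only $f=z_0$ does the job.
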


\begin{proof}
The coordinate function $\varphi(z)=z_0$ belongs to
$ \Ker V_1\cap\Ker V_2\cap\Ker V_5.$
Therefore, the vector fields 
$
 \varphi V_1,
 \varphi V_2,
 \varphi V_5$
are complete.  They vanish at $p_0$ because $\varphi(p_0)=0$, so their flows lie in the isotropy subgroup $Aut(X)_{p_0}$.

For each $V_j, j=1,2,5$, Lemma~\ref{lem:replica-derivative} and the identity
\[
 V_4(\varphi)(p_0)=1
\]
give us that
\begin{equation}\label{eq:isotropy-action}
 d\Phi_t^{\varphi V_j}\big|_{p_0}\bigl(V_4|_{p_0}\bigr)
 =V_4|_{p_0}+tV_j|_{p_0}.
\end{equation}
Thus, the isotropy orbit of $V_4|_p$ contains
\[
 V_4|_{p_0}, V_4|_{p_0}+tV_j|_{p_0}, j=1,2,5.
 \]
By Lemma~\ref{lem:basis-p0}, these four vectors (taking $t\neq0$) form a basis: subtracting the first from the last three recovers $V_1|_{p_0},V_2|_{p_0},V_5|_{p_0}$.  Hence, $\{V_4|_{p_0}\}$ is a generating set.
\end{proof}

{\bf \em Part V. Completing the proof of Theorem~\ref{thm-density}}

\smallskip

%\begin{proof}[Proof of Theorem~\ref{thm-density}]
\noindent 
Smoothness and the Stein property were proved in Proposition~\ref{prop:smooth-general}.  The automorphism group acts transitively by Theorem~\ref{thm:transitivity}.  Proposition~\ref{prop:compatible-AB} gives a compatible pair $(V_1,V_4)$, and Proposition~\ref{prop:V4-generating} shows that its second member $V_4|_{p_0}$ is a generating set at $p_0$.  By Result~\ref{crit:leuenberger}, therefore, we conclude that $X$ has the density property.
%\end{proof}

\begin{remark}
The argument uses only one compatible pair.  The remaining vector fields are needed to establish transitivity and to produce enough isotropy derivatives to make the vector $V_4|_p$ generating.
\end{remark}

\section{The special members $X_{d-1,d}$}\label{sec-Kaliman}

In this section we identify the members $X_{d-1,d}$ with Kaliman modification of certain varieties.

For an integer $d\geq3$, put
\begin{equation}\label{eq:Pd}
 P_d(r,a,b,c):=r^d+ab+c^2
\end{equation}
and let
\[
 Z_d:=P_d^{-1}(0)\subset\cplx^4_{r,a,b,c}.
\]
The algebraic hypersurface $Z_d$ is irreducible and ts singular locus consists only of the origin. 
%since
%\[
% \nabla P_d=(d r^{d-1},b,a,2c).\]
The point $p=(0,1,0,0)$
lies in the smooth locus because $\dfrac{\bdy P_d}{\bdy b}(p)=1$.

Let $A_0:=\cplx[r,a,b,c]$ and $I_p=(r,a-1,b,c)$.  Let us denote $\Kalmod(\cplx^4,Z-d, p)$ as
$\mathcal{K}_4^d$. 
The Kaliman modification along $Z_d$ with center $p$ has the coordinate ring
\begin{equation}\label{eq:Bd}
 \mathcal B_d
 :=A_0[I_p/P_d]
 =\cplx[r,a,b,c]
 \left[
 \frac r{P_d},
 \frac{a-1}{P_d},
 \frac b{P_d},
 \frac c{P_d}
 \right]
 \subset A_0[P_d^{-1}].
\end{equation}

Introduce elements of $\mathcal B_d$ by
\begin{equation}\label{eq:mod-new-coordinates}
 \zeta=P_d,
 \qquad
 z_0=\frac r{P_d},
 \qquad
 z_1=\frac{a-1}{P_d},
 \qquad
 z_2=\frac b{P_d},
 \qquad
 z_3=\frac c{P_d}.
\end{equation}
Then
\begin{equation}\label{eq:mod-old-coordinates}
 r=\zeta z_0,
 \qquad
 a=1+\zeta z_1,
 \qquad
 b=\zeta z_2,
 \qquad
 c=\zeta z_3.
\end{equation}
Substitution into $\zeta=P_d$ gives
\begin{align*}
 \zeta
 &=\zeta^dz_0^d+(1+\zeta z_1)\zeta z_2+\zeta^2z_3^2\\
 &=\zeta\left(\zeta^{d-1}z_0^d+z_2(1+\zeta z_1)+\zeta z_3^2\right).
\end{align*}
The calculation occurs in the localization where $\zeta=P_d$ is invertible, so cancellation yields
\begin{equation}\label{eq:mod-relation}
 z_2(1+\zeta z_1)+\zeta z_3^2+\zeta^{d-1}z_0^d=1.
\end{equation}
This is precisely the equation of $X_{d-1,d}$. With the above notations we state:

\begin{proposition}\label{prop:coordinate-ring-isomorphism}
For every $d\geq3$, the assignments~\eqref{eq:mod-new-coordinates} induce an isomorphism
\begin{equation}\label{eq:ring-isomorphism}
 \frac{\cplx[\zeta,z_0,z_1,z_2,z_3]}
 {\bigl(z_2(1+\zeta z_1)+\zeta z_3^2+\zeta^{d-1}z_0^d-1\bigr)}
 \xrightarrow{\ \cong\ }
 \mathcal B_d.
\end{equation}
Consequently,
\begin{equation}\label{eq:general-kalmod}
 X_{d-1,d}\cong\Kalmod(\cplx^4,Z_d,\{p\}).
\end{equation}
\end{proposition}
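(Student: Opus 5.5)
The plan is to construct the ring homomorphism explicitly, check that it is well defined and surjective, and then obtain injectivity from a dimension count, using that both rings are domains. Let $R:=\cplx[\zeta,z_0,z_1,z_2,z_3]$ and $G:=z_2(1+\zeta z_1)+\zeta z_3^2+\zeta^{d-1}z_0^d-1$. Define $\psi:R\to\mathcal B_d$ on generators by~\eqref{eq:mod-new-coordinates}, that is, $\zeta\mapsto P_d$, $z_0\mapsto r/P_d$, $z_1\mapsto (a-1)/P_d$, $z_2\mapsto b/P_d$, $z_3\mapsto c/P_d$. Each image lies in $\mathcal B_d$ by~\eqref{eq:Bd}.

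\emph{Well-definedness.} I first show $\psi(G)=0$, so that $\psi$ descends to the quotient $R/(G)$. The computation preceding the statement gives
\[
P_d\cdot\psi(G)=\psi(\zeta)\,\psi(G)=P_d-P_d=0
\]
in $A_0[P_d^{-1}]$; this is the substitution~\eqref{eq:mod-old-coordinates} into $\zeta=P_d$. The ring $\mathcal B_d$ is a subring of the domain $A_0[P_d^{-1}]\subset\Frac(A_0)$, and $P_d\neq0$ there. Hence I may cancel $P_d$ and conclude $\psi(G)=0$. This yields the induced map $\bar\psi:R/(G)\to\mathcal B_d$.

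\emph{Surjectivity.} By~\eqref{eq:Bd}, $\mathcal B_d$ is generated as a $\cplx$-algebra by $r,a,b,c$ and the four quotients $r/P_d,(a-1)/P_d,b/P_d,c/P_d$. The quotients are by definition the images of $z_0,z_1,z_2,z_3$. The elements $r,a,b,c$ are the images of $\zeta z_0$, $1+\zeta z_1$, $\zeta z_2$, $\zeta z_3$, respectively, by~\eqref{eq:mod-old-coordinates}. So every generator lies in the image and $\bar\psi$ is onto.

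\emph{Injectivity.} This is the one step that needs an argument beyond computation. By Proposition~\ref{prop:irreducible}, $R/(G)$ is a domain of Krull dimension four. On the other side, $A_0\subset\mathcal B_d\subset\Frac(A_0)$, so $\Frac(\mathcal B_d)=\Frac(A_0)$ has transcendence degree $4$ over $\cplx$. Since $\mathcal B_d$ is a finitely generated domain, its dimension is therefore $4$. The kernel of $\bar\psi$ is a prime $\mathfrak q$ of the four-dimensional affine domain $R/(G)$, with
\[
\dim (R/(G))/\mathfrak q=\dim\mathcal B_d=4.
\]
Because $R/(G)$ is a finitely generated domain, every nonzero prime $\mathfrak q$ satisfies $\dim(R/(G))/\mathfrak q<4$. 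Hence $\mathfrak q=0$ and $\bar\psi$ is an isomorphism, which proves~\eqref{eq:ring-isomorphism}.

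As a fallback, one could instead write down the inverse on the localizations at $\zeta$ and at $P_d$, where both rings become $A_0[P_d^{-1}]$, and check compatibility. I expect the dimension argument to be the cleaner route.

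\emph{Geometric conclusion.} Finally, I pass to varieties. By the algebraic description~\eqref{eq:affine-modification-ring} of the Kaliman modification, together with Result~\ref{res-Kaliman}, $\Kalmod(\cplx^4,Z_d,\{p\})$ is the affine variety with coordinate ring $\mathcal B_d$. Here $I_p=(r,a-1,b,c)$ contains $P_d$ because $p\in Z_d$, and $p$ is a smooth point of the irreducible divisor $Z_d$. Taking $\operatorname{Spec}$ of~\eqref{eq:ring-isomorphism} therefore gives~\eqref{eq:general-kalmod}.
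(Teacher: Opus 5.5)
Your proposal is correct and follows essentially the same route as the paper. You get well-definedness by cancelling $P_d$ in the domain $A_0[P_d^{-1}]$ and surjectivity from the generators $r,a,b,c$ and the four quotients. Injectivity comes from the fact that a surjection between four-dimensional affine domains has a prime kernel, which must therefore be zero. You spell out the well-definedness and surjectivity checks a little more explicitly than the paper does, but the argument is the same.
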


\begin{proof}
The formulas in \eqref{eq:mod-new-coordinates} define a surjective homomorphism from the coordinate ring $\frac{\cplx[\zeta,z_0,z_1,z_2,z_3]}
 {\bigl(z_2(1+\zeta z_1)+\zeta z_3^2+\zeta^{d-1}z_0^d-1\bigr)}$ to $\mathcal B_d$. The elements $\zeta,z_0,z_1,z_2,z_3$ generate $\mathcal B_d$. 
By Proposition~\ref{prop:irreducible}, applied to $(m,n)=(d-1,d)$, the ring $\frac{\cplx[\zeta,z_0,z_1,z_2,z_3]}
 {\bigl(z_2(1+\zeta z_1)+\zeta z_3^2+\zeta^{d-1}z_0^d-1\bigr)}$ is a domain with Krull dimension four.  The ring $\mathcal B_d$ is a finitely generated domain contained in $A_0[P_d^{-1}]$ and contains $A_0$, hence,
\[
 \Frac(\mathcal B_d)=\Frac(A_0).
\]
Therefore, $\dim\mathcal B_d=4$.  The kernel of the surjection \[
\frac{\cplx[\zeta,z_0,z_1,z_2,z_3]}
 {\bigl(z_2(1+\zeta z_1)+\zeta z_3^2+\zeta^{d-1}z_0^d-1\bigr)}\to\mathcal B_d
 \]
 is a prime ideal.  If it were nonzero, it would have positive height and the quotient would have dimension at most three, which is a contradiction to $\dim\mathcal B_d=4$.  Hence, the kernel is trivial.
\end{proof}

The isomorphism can also be seen geometrically.  Define
\begin{equation}\label{eq:pi-modification}
 \pi_d:X_{d-1,d}\longrightarrow\cplx^4,
 \qquad
 \pi_d(\zeta,z_0,z_1,z_2,z_3):=(\zeta z_0,1+\zeta z_1,\zeta z_2,\zeta z_3).
\end{equation}
A direct computation using the defining equation gives
\begin{equation}\label{eq:P-pi=x}
 P_d\circ\pi_d=\zeta.
\end{equation}
Hence, $\pi_d$ restricts to an isomorphism
\begin{equation}\label{eq:off-exceptional-isomorphism}
 \{\zeta\neq0\}\subset X_{d-1,d}
 \xrightarrow{\ \cong\ }
 \cplx^4\setminus Z_d,
\end{equation}
with inverse
\[
 (r,a,b,c)\longmapsto
 \left(P_d,\frac r{P_d},\frac{a-1}{P_d},\frac b{P_d},\frac c{P_d}\right).
\]
The exceptional divisor is
\begin{equation}\label{eq:exceptional-divisor}
 E_d:=\{\zeta=0\}\cap X_{d-1,d}
 =\{\zeta=0,\ z_2=1\}\cong\cplx^3_{z_0,z_1,z_3},
\end{equation}
and $\pi_d(E_d)=\{p\}$.  Thus~\eqref{eq:pi-modification} has exactly the expected geometry of the affine modification.

\subsection{The case $d=7$}
In this case, the linear change of coordinates
\begin{equation}\label{eq:linear-brieskorn-change}
 a=u+iv,
 \qquad
 b=u-iv,
 \qquad
 c=s
\end{equation}
transforms
\[
 P_7=r^7+ab+c^2
\]
into
\begin{equation}\label{eq:brieskorn-polynomial}
 r^7+u^2+v^2+s^2.
\end{equation}
This is precisely the Brieskorn polynomial used by McLean~\cite[Section~3.1]{McLean2009} (see Section~\ref{sec-prelims} for a brief description).  Since the chosen $p$ is a nonzero smooth point of $Z_7$, Proposition~\ref{prop:coordinate-ring-isomorphism} proves Theorem~\ref{thm-identification}:
\begin{equation}\label{eq:X=K4}
 X_{6,7}\cong K_4.
\end{equation}

\section{Proof of Theorem~\ref{thm-main}}\label{sec-symplectic}

Parts $(i)$ and $(iii)$ of Theorem~\ref{thm-main} follows from McLean's work \cite{McLean2009}. Here we show how it follows with a brief description of the topology of $K_4$.

Consider the Brieskorn variety
\[
 Z_d=\{r^d+u^2+v^2+s^2=0\}\subset\cplx^4.
\]
The following result is proved by Brieskorn \cite{Brieskorn1966} (see also \cite[Theorem~3.1]{McLean2009} for $d=7$ and \cite[Theorem~3]{Uebele2016} for general case).
\begin{result}\label{res-topBrieskorn}
For $d\in\natu$, $d\geq 2$ and odd, $Z_d\cap S^7$ is homeomorphic to $S^5$
\end{result}
 The variety $Z_d$ is topologically the cone on its link and is therefore homeomorphic to $\rea^6$ ( see~\cite[Corollary~3.2]{McLean2009} for $d=7$). 

Let $p\in Z_d\setminus \{0\}$. By Result~\ref{res-kalmodcontractible}, we obtain that $\Kalmod(\cplx^4, Z_d, p)$ is contractible. In particular, $K_4$ is contractible.
Hence, by Proposition~\ref{prop:coordinate-ring-isomorphism}, $X_{d-1,d}$ is contractible.
McLean then applies the topology of Kaliman modifications (see \cite{Kaliman1994}): when the divisor and the center are acyclic topological manifolds, the modification of a contractible affine variety remains contractible~\cite[Lemma~2.27]{McLean2009}.  Thus, $K_4$ is contractible.

Finally, a contractible finite-type Stein manifold of complex dimension at least three is diffeomorphic to affine space by the $h$-cobordism argument recorded in~\cite[Corollary~2.30]{McLean2009}. 

\begin{result}\label{res-r8}
$\mathcal{K}_4^d=\Kalmod(\cplx^4, Z_d, p)$ is diffeomorphic to $\rea^8$.
\end{result}
In particular, $K_4$ is diffeomorphic to $\rea^8$.
Together with~\eqref{eq:X=K4}, this proves part~(i) of Theorem~\ref{thm-main}.

For a finite-type Stein manifold $M$ with trivialized canonical bundle, McLean considers the unital, graded-commutative symplectic homology ring over $\mathbb{Z}_2$. Let $i(M)$ denote the  number of idempotents in the symplectic homology $SH_*(M)$. We mention the following result from McLean's paper \cite[Section~3.2, especially the use of Theorems~2.31 and 7.7]{McLean2009}.
\begin{result}\label{res-idempotents}
The number of idempotents of $SH_*(K_4)$ satisfies:
\[
 1<i(K_4)<\infty.
\]
\end{result}
 Since a nonzero unital ring contains the distinct idempotents $0$ and $1$, and the zero ring has only one element, Result~\ref{res-idempotents} implies
\begin{equation}\label{eq:SHK4-nonzero}
 \SH(K_4;\F_2)\neq0.
\end{equation}

Proceeding similarly as in \cite{McLean2009}, we can prove that, for $d\geq 2$ an odd integer, 
\begin{equation}\label{eq-SH-nonzero}
   \SH(\mathcal{K}_4^d;\F_2)\neq0
\end{equation}

\begin{proposition}\label{prop:not-biholo-C4}
The affine fourfold $X_{6,7}$ is not biholomorphic to $\cplx^4$.
\end{proposition}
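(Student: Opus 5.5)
The argument is by contradiction, using symplectic homology as a biholomorphic invariant. Suppose $\Phi\colon X_{6,7}\to\cplx^4$ is a biholomorphism. Both spaces are smooth affine varieties: $X_{6,7}$ by Proposition~\ref{prop:smooth-general} and Proposition~\ref{prop:irreducible}, and $\cplx^4$ trivially. By the facts recorded from \cite[Section~2.1]{McLean2012}, both are therefore finite-type Stein manifolds. Proposition~\ref{prop:biholo-SH-invariance} then says that their canonical completed exact symplectic manifolds are symplectomorphic, and hence that $\SH(X_{6,7};\F_2)\cong\SH(\cplx^4;\F_2)$ as rings.

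The next step transfers McLean's computation from $K_4$ to $X_{6,7}$. By Proposition~\ref{prop:coordinate-ring-isomorphism} with $d=7$, combined with the linear change of coordinates \eqref{eq:linear-brieskorn-change}, we have an algebraic isomorphism $X_{6,7}\cong K_4$; this is \eqref{eq:X=K4}. An algebraic isomorphism is in particular a biholomorphism between finite-type Stein manifolds. Applying Proposition~\ref{prop:biholo-SH-invariance} again gives $\SH(X_{6,7};\F_2)\cong\SH(K_4;\F_2)$. By \eqref{eq:SHK4-nonzero}, which is deduced from Result~\ref{res-idempotents}, this ring is nonzero.

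Combining the two isomorphisms, $\SH(K_4;\F_2)\cong\SH(\cplx^4;\F_2)$. The right-hand side vanishes by \eqref{eq:SH-C4}, since $\cplx^4$ is subcritical. This contradicts \eqref{eq:SHK4-nonzero}, so no biholomorphism $\Phi$ exists.

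The main point needing care is that McLean's symplectic homology of $K_4$ is computed with respect to the Stein structure coming from its affine embedding, together with the grading and trivialization conventions he uses. We must check that this agrees with the canonical finite-type structure on $X_{6,7}$ used in Proposition~\ref{prop:biholo-SH-invariance}. Since only nonvanishing of the ungraded ring over $\F_2$ is needed, grading issues disappear. What remains is the uniqueness of the completion up to symplectomorphism for any two finite-type exhaustions \cite[Section~2.1]{McLean2012}, which already covers this. I therefore expect no genuine obstacle beyond citing these invariance statements correctly.
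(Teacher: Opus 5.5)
Your proposal is correct and follows the paper's proof essentially verbatim. You assume a biholomorphism, use Proposition~\ref{prop:biholo-SH-invariance} to get $\SH(X_{6,7};\F_2)\cong\SH(\cplx^4;\F_2)=0$, and contradict this with the identification \eqref{eq:X=K4} and McLean's nonvanishing \eqref{eq:SHK4-nonzero}; your remarks on gradings and on independence of the finite-type exhaustion only make explicit what the paper leaves implicit.
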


\begin{proof}
Suppose that $X_{6,7}$ were biholomorphic to $\cplx^4$.  Both are smooth affine varieties, hence finite-type Stein manifolds.  Proposition~\ref{prop:biholo-SH-invariance} would imply
\[
 \SH(X_{6,7};\F_2)
 \cong
 \SH(\cplx^4;\F_2)=0
\]
by~\eqref{eq:SH-C4}.  On the other hand, the identification~\eqref{eq:X=K4} and McLean's computation~\eqref{eq:SHK4-nonzero} give
\[
 \SH(X_{6,7};\F_2)\neq0.
\]
This contradiction proves the assertion.
\end{proof}

\begin{proof}[Proof of Theorem~\ref{thm-main}]
Part~(i) follows from~\eqref{eq:X=K4} and the fact that $K_4$ is diffeomorphic to $\cplx^4$.  Part~(ii) is the specialization $(m,n)=(6,7)$ of Theorem~\ref{thm-density}.  Part~(iii) is Proposition~\ref{prop:not-biholo-C4}.  These are exactly the three properties required to contradict the conclusion of the T\'oth--Varolin conjecture.
\end{proof}

\begin{proof}[Proof of Theorem~\ref{thm-counterfamily}]
By Result~\ref{res-r8}, $\mathcal{K}_4^d$ is diffeomorphic to $\rea^8$. Hence, by Proposition~\ref{prop:coordinate-ring-isomorphism}, $X_{d-1,d}$ is diffeomorphic to $\rea^8$. By Theorem~\ref{thm-density}, 
we obtain that $X_{d-1,d}$ has the density property.
    In view of equation~\eqref{eq-SH-nonzero} and Proposition~\ref{prop:coordinate-ring-isomorphism}, we see that for each $d\geq 2$ odd, $X_{d-1,d}$ is not biholomorphic to $\cplx^4$. Therefore, the fourfold $X_{d-1,d}$ satisfies all (i), (ii) and (iii) of Theorem~\ref{thm-main}.
\end{proof}

\noindent {\bf Acknowledgements.} The author is partially supported by an ARG MATRICS grant (ANRF/ARGM/2025/002958/MTR). The author thanks Souvik Goswami for discussions about the Kaliman modification.

\end{document}